\documentclass[11pt, reqno]{amsart}
\usepackage{amsmath,amssymb,amsfonts,mathrsfs,verbatim,enumitem,pstricks,amsthm, graphicx,setspace}
\usepackage[margin=2.5cm]{geometry}
\usepackage[all]{xy}
\usepackage{centernot, xr, accents}
\usepackage{hyperref,comment}
\usepackage[T1]{fontenc}
\usepackage[utf8]{inputenc}
\usepackage{amsmath,amssymb,mathtools}
\usepackage[all]{xy}
\usepackage{hyperref}
\usepackage{verbatim}
\allowdisplaybreaks
\usepackage[bottom]{footmisc}
\usepackage{lipsum,cleveref}

\usepackage{hyperref}
\hypersetup{
colorlinks,
citecolor=teal,
filecolor=black,
linkcolor=blue,
urlcolor=black
}

\theoremstyle{plain}

\newtheorem{thm}{Theorem}[section]
\newtheorem{cor}[thm]{Corollary}
\newtheorem{lmm}[thm]{Lemma}
\newtheorem{prp}[thm]{Proposition}

\theoremstyle{definition}

\newtheorem{rem}[thm]{Remark}

\newtheorem{notn}[thm]{Notation}

\theoremstyle{definition}

\numberwithin{equation}{section}

\begin{document}
\title[On the isotropy group of monomial derivations]{On the isotropy group of monomial derivations}
\author[S. C. Mishra]{Sumit Chandra Mishra}
\address{Indian Institute of Technology Indore, Simrol, Khandwa Road, Indore 453 552, India.}
\email{sumitcmishra@gmail.com, sumitcmishra@iiti.ac.in}
\author[D. Mondal]{Dibyendu Mondal}
\address{Indian Institute of Technology Indore, Simrol, Khandwa Road, Indore 453 552, India.}
\email{mdibyendu07@gmail.com; mdibyendu@iiti.ac.in}
\author[P. Shukla]{Pankaj Shukla}
\address{Indian Institute of Technology Indore, Simrol, Khandwa Road, Indore 453 552 India}
\email{Pankajshuklashvm23@gmail.com, phd2201141002@iiti.ac.in}
\subjclass[2020]{13N15; 12H05; 13P05}
\let\thefootnote\relax\footnote{Keywords: Monomial derivation; Isotropy groups; Polynomial automorphisms; Jouanolou derivations.}

\begin{abstract}
In this article, we characterize the isotropy groups of certain special monomial and Jouanolou-type derivations of polynomial rings over fields of characteristic zero. Under suitable conditions, we determine the structure of these isotropy groups.
\end{abstract}
\date{\today}
\maketitle

\section{Introduction}\label{intro}
Throughout this article, all rings are assumed to be of characteristic zero and  commutative with identity. For any ring $R$, we denote its set of units by $R^*$. Let $k$ be a field and $n\in \mathbb{N}$. By $\mu_n$, we denote the group of all $n$-th roots of unity in $k$. We write $k[x_1,\ldots ,x_n]$ for the polynomial ring in $n$ variables over $k$. When $n$ is implicit, we write $k[X]$ instead of $k[x_1,\ldots,x_n]$. We denote by Aut$(k[X])$ the group of all $k$-automorphisms of $k[X]$. A $k$-derivation on $k[X]$ is a $k$-linear map $d : k[X] \to k[X]$ such that $d(fg) =gd(f) +fd(g)$ for all $f,g\in k[X]$. 

For a given derivation $d$ of $k[X]$, the \emph{isotropy group} of $d$, denoted by Aut$(k[X])_d$, is the subgroup of Aut$(k[X])$ consisting of all $k$-automorphisms commuting with $d$. The isotropy group of $d$ is the stabilizer of $d$ under the conjugation action of Aut$(k[X])$ on the set of all $k$-derivations of $k[X]$. The isotropy group plays a fundamental role in understanding the symmetries of the derivations, see \cite{BP21}. Geometrically, since every $k$-derivation $d$ of $k[X]$ determines a polynomial vector field on the affine space $\mathbb{A}_k^n$, the elements of Aut$(k[X])_d$ correspond to the polynomial symmetries of the associated vector field. 

In recent years, the study of isotropy groups of derivations has received considerable attention, with particular emphasis on the isotropy groups of locally nilpotent and simple derivations of $k[X]$. In case of simple derivation, Mendes and Pan showed that the isotropy group of any simple derivation of $k[x_1,x_2]$ is trivial, see \cite{MP17}. Additionally, Bertoncello and Levcovitz  proved that the isotropy group of a simple Shamshuddin derivation of any $k[X]$ is trivial, see \cite{BN}; converse of their result was subsequently proved by Yan in \cite{Yan24}. Further study of isotropy groups of simple derivations can be found in \cite{Bal26, MMS26, MPR24, Yan22}. Danielewski-type varieties form an important class of varieties in the study of cancellation problems in affine geometry. The isotropy group of a locally nilpotent derivation on Danielewski surfaces was first studied by Baltazar and Veloso in \cite{Bal21}. Subsequently, isotropy groups of locally nilpotent derivations on Danielewski-type varieties were investigated in \cite{Bal25, DL23}. Further study of isotropy group of non-simple derivations and locally finite derivations of $k[x_1,x_2]$ were studied in \cite{RK23} and \cite{CV26}, respectively. In this article, we study isotropy group of certain classes of monomial derivations.

Let $d$ be a $k$-derivation of $k[X]=k[x_1,\ldots ,x_n]$.  The derivation $d$ is called a \emph{monomial derivation} if, for each $i\in \{1,\ldots ,n\}$, $$d(x_i)=\alpha_i x_1^{\beta_{i1}}......x_n^{\beta_{in}}$$ where $\alpha_i\in k^{\ast}$ and each $\beta_{ij}$ is a non-negative integer. A non-constant polynomial $f\in k[X]$ is said to be a {\it Darboux polynomial} of $d$ if $d(f)=\Lambda f$ for some $\Lambda\in k[X]$. If a non-constant polynomial $f\in k[X]^d$ (the ring of constants of $d$) then $f$ is a Darboux polynomial of $d$. Consequently, if $d$ has no Darboux polynomial then the ring of constant $k[X]^d$ is trivial, i.e., $k[X]^d=k$. But the converse does not hold in general, see \cite[Lemma 10.4.1]{n94}. For a monomial derivation $d$, the determination of the field of rational constants $k(X)^d$ is a fundamental problem. Indeed, nonconstant elements of $k(X)^d$ are precisely the rational first integrals of the differential system associated with $d$, see \cite{NZ}. A monomial derivation $d$ is called \emph{special} if either $d$ has no Darboux polynomial, or any irreducible Darboux polynomial of $d$ is, up to multiplication by a non-zero scalar, one of the coordinate variables. In \cite{OJN}, the authors proved that, for certain monomial derivations, the triviality of the field of rational constants can be characterized entirely in terms of Darboux polynomials. 
 
In Section \ref{sec 3}, we study the isotropy group of special monomial derivations with $\beta_{ii}\geq 1$ for all $i=1,\ldots,n$. Let $A=[\beta_{ij}]-I$ be the matrix associated with the derivation $d$. We give a characterization of the isotropy group of such special monomial derivations with det$(A)\neq 0$ (see \Cref{iso-thm}). In particular, if $\alpha_i=1$ for all $i=1,\ldots ,n$, then we prove that Aut$(k[X])_d\cong (\mu_{d_1}\times\cdots\times\mu_{d_n})\rtimes K$, where $d_i\in \mathbb{N}$ are such that $\text{diag}\{d_1,\ldots,d_n\}$ is the Smith normal form of $A$, and $K$ is the centralizer of $A$ in the symmetric group $S_n$, viewed as the group of all $n\times n$ permutation matrices over $k$ (see \Cref{sem-dir-product}). 

In Section \ref{sec 4}, we study the Jouanolou-type derivations $J:=J(n,r,\sigma)=\sum_{i=1}^n x_i^{\,r}\,\partial_{x_{\sigma(i)}}$ of the polynomial ring $k[x_1,\ldots,x_n]$, where $\sigma \in S_n$ is a non-identity permutation. For $r\geq 2$ and $n\geq 2$, we obtain a complete characterization of the isotropy group of $J$ (see \Cref{J main}). 
In \Cref{thm:isotropy}, we prove that $$\text{Aut}(k[X])_J\cong\prod_m\left(\mu_{r^m-1}^{l_m}\rtimes((\mathbb{Z}/m\mathbb{Z})^{l_m}\rtimes S_{l_m})\right),$$ where $m$ ranges over the cycle lengths occurring in the disjoint cycle decomposition of $\sigma$, and $l_m$ denotes the number of $m$-cycles appearing in the decomposition. We also describe the corresponding semidirect product structure.

\section{Preliminaries}\label{prelim}
  Let $R$ be a commutative ring with identity. For a positive integer $n$, we write $M_n(R)$ for the ring of all $n\times n$ matrices over $R$. We denote by $I$ the identity matrix in $M_n(R)$. Let $a_1,\ldots,a_n\in R$. We denote by diag$\{a_1,\ldots,a_n\}$ the diagonal matrix in $M_n(R)$ whose diagonal entries are $a_1,\ldots,a_n$.
  The symbol $\delta_{ij}$ denotes the Kronecker delta function for $i,j\in \{1,\ldots ,n\}$.
  Let $k[X]:=k[x_1,\ldots,x_n]$ be the polynomial ring in $n$ variables over $k$, and let $k(X):=k(x_1,\ldots,x_n)$ denote its field of fractions. Let $d$ be a $k$-derivation of $k[X]$. Then $d$ extends uniquely to a $k$-derivation of $k(X)$, which we also denote by $d$. We write $k(X)^d:=\{\varphi\in k(X)|\, d(\varphi)=0\}$ for the field of rational constants of $d$. Moreover, we say that the field of rational constants of $d$ is trivial if and only if $k(X)^d=k$.

  Let $(G,\cdot)$ be a group. Then the opposite group of $G$, denoted by $(G^{op},\star)$, is a group with underlying set same as that of $G$ with  the binary operation given by $g \star h=h\cdot g$ for all $g,h \in G^{op}$. Note that $G$ is isomorphic to $G^{op}$ since the map $g \mapsto g^{-1}$ for all $g\in G$ gives an isomorphism.

\subsection{Monomial derivations} Recall that a derivation $d$ of $k[X]$ is said to be a \emph{monomial derivation} if $d(x_i)=\alpha_i{x_1}^{\beta_{i1}}\dots{x_n}^{\beta_{in}}$ for $i=1,\dots,n$, where each $\beta_{ij}$ is a non-negative integer and $\alpha_i\in k^{\ast}$ for all $i\in \{1,\ldots,n\}$.
   
   A monomial derivation $d$ of $k[X]$ is called {\it special} if either $d$ has no Darboux polynomials, or every irreducible Darboux polynomial of $d$ is, up to multiplication by a non-zero scalar, one of the coordinate variables $\{x_1,\ldots,x_n\}$.
   
   Note that if $d$ is without Darboux polynomials, then $k(X)^d=k$. The following theorem provides a characterization of monomial derivations with trivial field of rational constants in terms of special derivations.
   
   \begin{thm}\label{lem1}\cite[Theorem 3.3]{OJN}
    Let $d$ be a derivation of a polynomial ring $k[X]=k[x_1,\dots,x_n]$, where $k$ is a field of characteristic zero. Assume that $$d(x_i)=\alpha_i{x_1}^{\beta_{i1}}\dots{x_n}^{\beta_{in}}$$ for $i=1,\dots,n$, where each $\alpha_i\in k^*$ and each $\beta_{ij}$ is non-negative integer. Denote by $A$ the $n\times n$ matrix $[\beta_{ij}]-I,$ and let $w_d=det(A)$. If $w_d\neq 0$, then the following conditions are equivalent.
    \begin{itemize}
        \item[(1)] Either $d$ is without Darboux polynomial or all irreducible Darboux polynomials of $d$ belong to the set $\{x_1,\dots,x_n\}.$
        \item[(2)] The field $k(X)^d$ is trivial.
    \end{itemize}
 \end{thm}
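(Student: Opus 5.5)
The plan is to prove the two implications separately. Throughout I use the standard device for monomial derivations: the rescaled derivation $\delta = (x_1\cdots x_n)^{-1}\cdot(\ldots)$, written in terms of the logarithmic derivatives $y_i = d(x_i)/x_i = \alpha_i x^{A_i}$. Here $A_i$ denotes the $i$-th row of $A=[\beta_{ij}]-I$, and negative exponents are allowed because we work in the Laurent ring $k[X^{\pm1}]$.

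\textbf{(2)$\Rightarrow$(1).} I argue by contrapositive. Suppose $f$ is an irreducible Darboux polynomial of $d$ that is not associate to any $x_i$.
\begin{itemize}
\item For every $c\in k^*$ and every $i$, the rescaling $x_i\mapsto c^{e_i}x_i$ multiplies $d(x_i)$ by $c^{(\beta_i\cdot e)}$. When $(A e)_i = s$ for all $i$, this map conjugates $d$ to $c^{s}d$.
\item Because $\det A\neq0$, the system $Ae=(N,\ldots,N)^T$ has a rational solution. Taking $N=\det A$ makes $e$ integral, so $d$ is homogeneous with respect to a $\mathbb{Z}$-grading with weights $e$, and $d$ has degree $N$.
\item Homogeneous components of a Darboux polynomial of a graded derivation are again Darboux. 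The standard argument then lets me take $f$ weighted homogeneous, with cofactor $\Lambda$.
\item I compare $\Lambda$ with the cofactors $d(x_i)/x_i$, which are Laurent monomials. I then look for integers $m$ and $a_1,\ldots,a_n$ such that $f^m x_1^{a_1}\cdots x_n^{a_n}$ has zero cofactor, which produces a nonconstant rational constant.
\end{itemize}
To make that last step work, I need $\Lambda$ to be a monomial combination. I would first show that every Darboux cofactor of $d$ is a $k$-combination of the monomials $x^{A_i}$. This follows from $d$ being homogeneous with respect to the full $\mathbb{Z}^n$-grading twisted by $A$: $d$ maps the monomial $x^a$ to $\sum_i a_i\alpha_i x^{a+A_i}$, so it is $\mathbb{Z}^n$-graded modulo the lattice generated by the rows $A_i$. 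Since $\det A\neq0$, that lattice has finite index, which makes a finite abelian group $G=\mathbb{Z}^n/A^T\mathbb{Z}^n$ act diagonally on $\bar k[X]$ and commute with $d$ up to scalar.

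The key step, and the one I expect to be the main obstacle, is this. I take the product $F=\prod_{g\in G} g(f)$. It is $G$-invariant up to a monomial factor, and it is still Darboux because $d$ commutes with the action. I then want the cofactor of $F$ to be a rational combination of the $d(x_i)/x_i$. If that holds, $F/x^{a}$ for a suitable rational $a$, cleared by taking powers, has cofactor zero. It is nonconstant because $f$ is not a monomial. To establish the cofactor claim, I would write the cofactor of $F$ in $\mathbb{Z}^n$-graded pieces. Invariance forces it to lie in degree $0$ modulo the lattice generated by the $A_i$. Degree bookkeeping then forces it to equal $\sum_i c_i\,d(x_i)/x_i$ with constant $c_i$. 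Finally, invertibility of $A$ lets me solve for exponents $a$ with $\sum_i a_i A_i$ matching, giving a rational first integral. If the $c_i$ turn out to be non-rational, I would instead use the argument from the cited paper via the Laurent polynomial $\prod_i x_i^{\lambda_i}$.

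\textbf{(1)$\Rightarrow$(2).} Let $\varphi=P/Q$ be a nonconstant rational constant with $\gcd(P,Q)=1$. From $d(P)Q=P\,d(Q)$ and coprimality, $P$ divides $d(P)$ and $Q$ divides $d(Q)$, with equal cofactor. Every irreducible factor of a Darboux polynomial is Darboux, so by (1) all irreducible factors of $P$ and $Q$ are variables. Hence $\varphi=c\,x^{a}$ with $a\in\mathbb{Z}^n\setminus\{0\}$. Then $d(\varphi)/\varphi=\sum_i a_i\alpha_i x^{A_i}$. The rows of $A$ are distinct because $A$ is invertible, so these are distinct monomials and the sum vanishes only if $a=0$. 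That is a contradiction, so $k(X)^d=k$.
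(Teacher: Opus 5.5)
The paper gives no proof of this statement; it imports it as \cite[Theorem 3.3]{OJN}, so your argument has to stand on its own. Your direction (1)$\Rightarrow$(2) is correct and complete. The direction (2)$\Rightarrow$(1), however, has two genuine gaps.

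The first gap is the shape of the cofactor. Your preliminary claim that every Darboux cofactor of $d$ is a combination of the $x^{A_i}$ is false. For $d(x)=y^2$, $d(y)=x^2$ one has $\det A=-3$, yet $d(x-y)=-(x+y)(x-y)$, and $-(x+y)$ is not a combination of $x^{-1}y^2$ and $x^{2}y^{-1}$. A grading by the finite group $\mathbb{Z}^n/A^{T}\mathbb{Z}^n$ has no top degree, so, unlike a $\mathbb{Z}$-grading, it does not force cofactors to be homogeneous. Now take the invariant product $F=\prod_g g(f)$; the acting group is really $\{c\in(\bar k^{*})^n : c^A=1\}$, which commutes with $d$ exactly. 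Invariance only gives $\operatorname{supp}\Lambda_F\subseteq A^{T}\mathbb{Z}^n$. Your $\mathbb{Z}$-grading with $Ae=N(1,\dots,1)^{T}$ only adds that an exponent $\gamma=A^{T}m$ satisfies $\sum_i m_i=1$. That does not force $m=e_j$: for $d(x)=1$, $d(y)=xy^2$ the lattice is all of $\mathbb{Z}^2$, and $x^{3}y^{2}=x^{2A_2-A_1}$ passes both tests. So ``degree bookkeeping'' does not establish that $\Lambda_F$ is a combination of the $d(x_i)/x_i$.

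What is missing there is a Newton-polytope argument. Write $F=\sum_m f_mx^m$. Since $d(x^m)=x^m\sum_i m_i\alpha_i x^{A_i}$, comparing $\phi$-leading terms of $d(F)$ and $\Lambda_FF$ for generic linear forms $\phi$ gives $\max_{\operatorname{supp}\Lambda_F}\phi\le\max_i\phi(A_i)$. Hence $\operatorname{supp}\Lambda_F\subseteq\operatorname{conv}\{A_1,\dots,A_n\}$. Because the $A_i$ are linearly independent, a point $\sum_i m_iA_i$ with $m\in\mathbb{Z}^n$ lies in this simplex only if $m=e_j$ for some $j$. Combined with invariance, this gives $\Lambda_F=\sum_j\kappa_j\alpha_jx^{A_j}$.

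The second gap is rationality, which you leave open and defer to the cited paper. The fallback $\prod_i x_i^{\lambda_i}$ with non-integral exponents is not an element of $k(X)$, so as written your construction need not produce a rational constant. Integrality in fact follows from one more extreme-term comparison:
\begin{itemize}
\item Choose $\phi$ so that $A_j$ is its unique maximiser on $\{A_1,\dots,A_n\}$; this is possible because $A_j$ is a vertex of the simplex.
\item Also arrange that $m_0$ is the unique maximiser of $\phi$ on $\operatorname{supp}F$.
\item In $d(F)-\Lambda_FF=\sum_{m,i}f_m\alpha_i(m_i-\kappa_i)x^{m+A_i}$, the coefficient of $x^{m_0+A_j}$ is $f_{m_0}\alpha_j\bigl((m_0)_j-\kappa_j\bigr)$.
\item Hence $\kappa_j=(m_0)_j\in\mathbb{Z}_{\ge 0}$.
\end{itemize}
Finally, the group $\{c:c^A=1\}$ is $\operatorname{Gal}(\bar k/k)$-stable, so $F\in k[X]$, and $F/x^{\kappa}$ is a nonconstant element of $k(X)^d$. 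This also settles the descent from $\bar k$ to $k$, which your sketch does not address. Your rescaling/$\mathbb{Z}$-grading step is not needed at all.
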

 
 \subsection{Smith normal form} \cite{Ku,MR,stan}
 Let $R$ be a commutative ring with identity, and let $M\in M_n(R)$. Recall that the elementary row and column operations on $M$ over $R$ consist of interchanging two rows (resp. columns) with one another, adding a multiple of one row (resp. column) to another row (resp. column), and multiplying a row (resp. column) by a unit of $R$. Furthermore, elementary row (resp. column) operation on $M$  corresponds to left (resp. right) multiplication of $M$ by invertible matrices over $R$. Consequently, two matrices $M,N\in M_n(R)$ are said to be {\it equivalent} if there exist invertible matrices $P$ and $Q$ in $M_n(R)$ such that $PMQ=N$.
 
 Let $D\in M_n(R)$ denote a diagonal matrix. The matrix $D$ is called a {\it diagonal form} of $M$ if the matrices $M$ and $D$ are equivalent. A diagonal form $D=\text{diag}\{d_1,\ldots,d_r,0,\ldots, 0\}$ of $M$ is called a \emph{Smith normal form} (SNF) of $M$ if $d_i$ divides $d_{i+1}$ for all $1\leq i \leq r-1$. 
  
 Suppose that $M$ is a unimodular matrix over $R$, i.e., det($M$) is a unit in $R$, then the identity matrix $I$ is a diagonal form of $M$ as $M^{-1}MI=I$. Thus, diag$\{1,1,\dots, 1\}$ is a Smith normal form of $M$. 
 
 Next, we recall some results regarding Smith normal forms over principal ideal domains. The following theorem establishes the existence of a Smith normal form over a principal ideal domain. 
 \begin{thm}\cite[Theorem 3.8]{Jac}\label{snf}
     Let $R$ be a principal ideal domain and $M\in M_n(R)$, then the diagonal matrix diag$\{d_1,d_2\dots,d_r,0,\ldots,0\}$ is a Smith normal form of $M$, where $d_i\neq0$ for all $1\leq i\leq r$ and $d_i$ divides $d_j$ if $i\leq j.$
 \end{thm}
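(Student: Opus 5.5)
We argue by induction on $n$, reducing $M$ step by step using elementary row and column operations, which are realized by multiplication by invertible matrices over $R$. The tool is the \emph{length} of a nonzero element $a\in R$: the number of prime factors of $a$ counted with multiplicity, with units having length $0$. This is well defined because a principal ideal domain is a unique factorization domain. If $M=0$ there is nothing to prove, so we assume $M\neq 0$.

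\textbf{Step 1 (moving a good entry to the corner).} Among all matrices equivalent to $M$, we choose one, $M'$, whose nonzero entry of minimal length is as small as possible, and we permute rows and columns so that this entry sits in position $(1,1)$. Call it $a$.

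\textbf{Step 2 ($a$ divides its row and column).} Suppose $a$ does not divide some entry $b$ in the first column. Since $R$ is a PID, $(a,b)=(g)$ with $g=sa+tb$, and $g$ has strictly smaller length than $a$. Write $a=ga'$ and $b=gb'$, so that $sa'+tb'=1$. Then the matrix
$$\begin{pmatrix} s & t\\ -b' & a'\end{pmatrix}$$
has determinant $1$. Applying it to rows $1$ and $i$ (embedded in the identity matrix) produces $g$ in position $(1,1)$, contradicting the minimality of $a$. The same argument applies to the first row, using column operations. Hence $a$ divides every entry in its row and column, and subtracting suitable multiples clears them. We now have $M'\sim \mathrm{diag}\{a\}\oplus M_1$.

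\textbf{Step 3 ($a$ divides everything).} If $a$ fails to divide some entry $c$ of $M_1$, we add that row of $M_1$ to the first row. This places $c$ in the first row without changing $a$, and Step 2 then yields an entry of smaller length, again a contradiction. Therefore $a$ divides every entry of $M_1$.

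\textbf{Step 4 (induction and assembly).} Applying the induction hypothesis to $M_1\in M_{n-1}(R)$ gives $PM_1Q=\mathrm{diag}\{d_2,\ldots,d_r,0,\ldots,0\}$ with $d_2\mid d_3\mid\cdots$. Every entry of $PM_1Q$ is an $R$-linear combination of entries of $M_1$, and the ideal generated by those entries is contained in $(a)$. So $a$ divides each $d_i$. Setting $d_1=a$ and embedding $P$ and $Q$ as $1\oplus P$ and $1\oplus Q$ completes the induction. Finally, divisibility is transitive, so $d_i\mid d_j$ holds for all $i\le j$, not only for consecutive indices.

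The main obstacle is Step 2. The divisibility cleanup needs a unimodular $2\times 2$ matrix rather than a plain Euclidean division, because $R$ need not be Euclidean. That is where the Bézout property of a PID, together with the well-defined length function (ensuring the minimum exists), is essential.
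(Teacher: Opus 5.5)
Your argument is correct. It is essentially the standard proof from the source the paper cites for this result (Jacobson, Theorem 3.8); the paper quotes that result without proving it. Like Jacobson, you use the length of an element in the UFD, a B\'ezout-type unimodular $2\times 2$ matrix to lower the length of the corner entry, and induction on $n$. The only cosmetic difference is that you pick a length-minimizing member of the equivalence class at the outset, whereas Jacobson repeats the reduction and uses the strict decrease in length to force termination.
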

Moreover, the Smith normal form of $M$, in \Cref{snf}, is unique up to multiplication of each non-zero diagonal entry $d_i$ by a unit of $R$. 
 
 \begin{thm}\cite[Proposition 8.1]{MR}\label{compute d_i}
 Let $R$ be a unique factorization domain and $M\in M_n(R)$. Assume that diag$\{d_1,\dots,d_r,0\ldots,0\}$ is a Smith normal form of $M$, where $1\leq r\leq n$. Then, for all $1\leq l\leq n$, we get that $d_1d_2\cdots d_l$ is equal to the gcd of all $l\times l$ minors of $M$, where $d_l=0$ for $l>r$, and with the convention that if all $l\times l$ minors are zero then their gcd is zero.
 \end{thm}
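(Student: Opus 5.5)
The plan is to show that the gcd of the $l\times l$ minors (the $l$-th determinantal divisor, defined up to a unit of $R$) is invariant under matrix equivalence, and then to compute it directly for the diagonal matrix $D=\text{diag}\{d_1,\dots,d_r,0,\dots,0\}$. Since $R$ is a unique factorization domain, gcds of finite families exist and are unique up to units. Throughout, equalities of gcds are understood up to multiplication by a unit of $R$, and the gcd of a family consisting only of zeros is $0$. Write $\Delta_l(M)$ for the gcd of all $l\times l$ minors of $M$.

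\emph{Step 1: invariance.} Let $P,Q\in M_n(R)$ be invertible. By the Cauchy--Binet formula, every $l\times l$ minor of $PM$ is an $R$-linear combination of $l\times l$ minors of $M$. Indeed, for row set $I$ and column set $J$,
\[
\det\big((PM)_{I,J}\big)=\sum_{|K|=l}\det(P_{I,K})\det(M_{K,J}).
\]
The same holds for $MQ$, with the expansion taken over the columns. Hence $\Delta_l(M)$ divides every $l\times l$ minor of $PMQ$, and so $\Delta_l(M)\mid \Delta_l(PMQ)$. Applying the same argument to $M=P^{-1}(PMQ)Q^{-1}$ gives the reverse divisibility. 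Therefore $\Delta_l(M)=\Delta_l(PMQ)$ up to a unit. The same argument shows that if all $l\times l$ minors of one matrix vanish, then so do those of the other, so the zero convention is respected.

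\emph{Step 2: the diagonal case.} Since $D$ is a Smith normal form of $M$, we have $D=PMQ$, and by Step 1 it suffices to compute $\Delta_l(D)$. An $l\times l$ submatrix $D_{I,J}$ has nonzero determinant only if $I=J$, because otherwise it contains a zero row. Its determinant is then $\prod_{i\in I}d_i$, with $d_i=0$ for $i>r$.
\begin{itemize}
\item If $l>r$, every such product contains a zero factor, so all $l\times l$ minors vanish and $\Delta_l=0=d_1\cdots d_l$.
\item If $l\le r$, write $I=\{i_1<\dots<i_l\}$. Then $i_j\ge j$, and the divisibility chain gives $d_j\mid d_{i_j}$. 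Hence $d_1\cdots d_l$ divides every minor. It is itself the minor with $I=\{1,\dots,l\}$, so it is the gcd.
\end{itemize}

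The only step requiring real care is Step 1. One must apply Cauchy--Binet for non-square factor blocks, or alternatively check invariance under each elementary operation separately. One must also note that gcds in a UFD behave well with respect to divisibility of linear combinations: a common divisor of the generators divides every $R$-linear combination of them. Both points are standard, and the rest of the argument is bookkeeping.
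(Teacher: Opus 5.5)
Your proof is correct, and it is the standard argument: determinantal divisors $\Delta_l$ are invariant under $M\mapsto PMQ$ by Cauchy--Binet, and the diagonal case is then computed directly (there $d_j\mid d_{i_j}$ holds trivially when $i_j>r$, since $d_{i_j}=0$). The paper gives no proof of its own for this statement; it cites it from \cite{MR} (see also \cite{stan}), and the cited sources prove it by essentially this route.
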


 The proofs of \Cref{snf} and \Cref{compute d_i} can also be found in \cite[Theorem 17]{Ku} and \cite[Theorem 2.4]{stan}, respectively.

 Next, we introduce some notations which will be followed throughout this article. For a matrix $M=[m_{ij}]\in$ M$_n(\mathbb{Z})$ and $c=(c_1,\ldots ,c_n)\in k^n$, we define $$c^M:=\left(\prod_{i=1}^nc_i^{m_{1i}},\dots,\prod_{i=1}^nc_i^{m_{ni}}\right) \in k^n.$$
 We denote by $1$ the identity element $(1,\dots, 1)$ of the group $(k^\ast)^n$. Let $c=(c_1,\ldots,c_n)\in (k^\ast)^n$, we denote the inverse of $c$ in $(k^\ast)^n$ by $c^{-1}$, where $c^{-1}:=(c_1^{-1},\ldots,c_n^{-1})$. 
\begin{lmm}\label{lem c^M}
Let $M,N\in M_n(\mathbb Z)$ and $c,d\in (k^\ast)^n$. Then
\begin{enumerate}
\item $(c d)^M=c^M d^M$;
\item $(c^{-1})^M=(c^M)^{-1}$;
\item $(c^M)^N=c^{NM}$;
\item $c^{I}=c$, where $I\in M_n(\mathbb Z)$ is the identity matrix.
\end{enumerate}
\end{lmm}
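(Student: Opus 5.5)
The plan is to unwind the definition of $c^M$ coordinatewise and check each identity in a single coordinate. Since every $c_i\in k^\ast$, all integer powers $c_i^{m}$ (including negative ones) make sense in the abelian group $k^\ast$. The whole lemma then reduces to the exponent laws $a^{p}a^{q}=a^{p+q}$, $(a^p)^q=a^{pq}$, and $(ab)^p=a^pb^p$ in $k^\ast$. We also need that $c^M\in(k^\ast)^n$ again, which holds because it is a product of units.

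For (1), the $j$-th coordinate of $(cd)^M$ is $\prod_i (c_id_i)^{m_{ji}}$. By commutativity this equals $\prod_i c_i^{m_{ji}}\prod_i d_i^{m_{ji}}$, which is the $j$-th coordinate of $c^Md^M$. Item (2) follows from (1): we have $c^M (c^{-1})^M=(cc^{-1})^M=1^M=1$, since every power of $1$ is $1$.

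For (4), the $j$-th coordinate of $c^I$ is $\prod_i c_i^{\delta_{ji}}=c_j$.

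The only step needing care is (3), because of the order of $NM$. Write $N=[n_{ij}]$ and set $b=c^M$, so that $b_l=\prod_i c_i^{m_{li}}$. Then the $j$-th coordinate of $b^N$ is
\[
\prod_l b_l^{n_{jl}}=\prod_l\prod_i c_i^{m_{li}n_{jl}}=\prod_i c_i^{\sum_l n_{jl}m_{li}}=\prod_i c_i^{(NM)_{ji}},
\]
which is exactly the $j$-th coordinate of $c^{NM}$. The main obstacle is just keeping the indices straight so that $NM$, and not $MN$, appears. This is consistent with $c^M$ being the multiplicative analogue of $M$ acting on the column vector of exponents.
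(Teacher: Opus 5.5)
Your proposal is correct and follows essentially the same route as the paper: you unwind the definition coordinatewise, and your index computation for (3), $\sum_l n_{jl}m_{li}=(NM)_{ji}$, and your Kronecker-delta check for (4) are exactly the paper's. The only cosmetic difference is that you derive (2) from (1) via $c^M(c^{-1})^M=(cc^{-1})^M=1$, whereas the paper simply says that (1) and (2) follow directly from the definition.
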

\begin{proof}
Assertions (1) and (2) follow directly from the definition of $c^M$. Let $M=[m_{ij}]$, $N=[n_{ij}]$, and $c=(c_1,\ldots,c_n)\in (k^{\ast})^n$. To prove (3), observe that the $s$-th component of $(c^M)^N$ is $$\prod_{j=1}^{n}\left(\prod_{i=1}^{n}c_i^{m_{ji}}\right)^{n_{sj}}=\prod_{i=1}^{n}c_i^{\sum_{j=1}^{n}n_{sj}m_{ji}}.$$ Since $\sum_{j=1}^{n} n_{sj}m_{ji}=(NM)_{si}$, it follows that the $s$-th component of $(c^M)^N$ coincides with the $s$-th component of $c^{NM}$. Hence $(c^M)^N=c^{NM}$.

Finally, since $I=[\delta_{ij}]$, we have $(c^{I})_s=\prod_{i=1}^{n}c_i^{\delta_{si}}=c_s$ for every $s=1,\ldots,n$. Therefore $c^{I}=c$.
\end{proof}

Let $A\in M_n(\mathbb{Z})$ with det$(A)\neq 0$. By \Cref{snf}, $A$ admits a Smith normal form $D=$ diag$\{d_1,\ldots,d_n\}$. Since det$(A)\neq 0$, we have $d_i\neq 0$ for all $i=1,\ldots ,n$. Moreover, the diagonal entries $d_i$ of $D$ are unique up to multiplication by units of $\mathbb{Z}$. Therefore, by choosing each $d_i$ to be positive, we obtain a unique Smith normal form of $A$.

\begin{lmm}\label{struc lem H_A}
Let $A\in M_n(\mathbb Z)$ with det$(A)\neq 0$, and let $D=\text{diag}\{d_1,\dots,d_n\}$ be the Smith normal form of $A$ with $d_i\in \mathbb{N}$ for all $i=1,\ldots ,n$. Let $H_A=\{c\in (k^*)^n|\,c^A=1\}$ be the subgroup of $(k^*)^n$. Then $H_A\cong \mu_{d_1}\times\cdots\times\mu_{d_n},$ where $\mu_{d_i}$ denotes the group of $d_i$-th roots of unity in $k$.
\end{lmm}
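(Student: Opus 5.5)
By \Cref{snf}, there are invertible matrices $P,Q\in M_n(\mathbb{Z})$, i.e.\ with $\det P,\det Q\in\{\pm1\}$, such that $PAQ=D$. I will use the exponent calculus of \Cref{lem c^M} to move between $A$ and $D$, and then describe $H_D$ directly.

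\emph{Step 1: $H_A\cong H_{AQ}$.} For an invertible $N\in M_n(\mathbb{Z})$, the map $c\mapsto c^N$ is an automorphism of $(k^*)^n$. It is a homomorphism by \Cref{lem c^M}(1). Its inverse is $c\mapsto c^{N^{-1}}$, since by (3) and (4) we have $(c^N)^{N^{-1}}=c^{N^{-1}N}=c^I=c$, and symmetrically in the other order. Now put $\phi(c)=c^{Q^{-1}}$. Then
\[
\phi(c)^{AQ}=c^{AQ\,Q^{-1}}=c^{A},
\]
so $c\in H_A$ if and only if $\phi(c)\in H_{AQ}$. Hence $\phi$ restricts to an isomorphism $H_A\cong H_{AQ}$.

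\emph{Step 2: $H_{AQ}=H_{PAQ}=H_D$.} For any $c$ we have $c^{PAQ}=(c^{AQ})^{P}$ by (3). If $c^{AQ}=1$, then $c^{PAQ}=1^P=1$. Conversely, if $c^{PAQ}=1$, then $c^{AQ}=(c^{AQ})^{P^{-1}P}=\bigl((c^{AQ})^{P}\bigr)^{P^{-1}}=1^{P^{-1}}=1$. So the two subgroups coincide.

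\emph{Step 3: compute $H_D$.} From the definition, $c^D=(c_1^{d_1},\ldots,c_n^{d_n})$. Therefore $H_D=\{c : c_i^{d_i}=1 \text{ for all } i\}=\mu_{d_1}\times\cdots\times\mu_{d_n}$, and combining with Steps 1 and 2 gives $H_A\cong\mu_{d_1}\times\cdots\times\mu_{d_n}$.

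The only step needing care is the order of the matrix products in \Cref{lem c^M}(3): since $(c^M)^N=c^{NM}$, the left factor $P$ must be absorbed after applying $AQ$, which gives the equality $H_{AQ}=H_D$, while the right factor $Q$ is absorbed by the change of variables $\phi$, which gives the isomorphism $H_A\cong H_{AQ}$. No assumption on $k$ is needed; in particular $k$ need not be algebraically closed, since $\mu_{d_i}$ denotes the roots of unity lying in $k$.
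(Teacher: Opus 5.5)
Your proof is correct and is essentially the paper's argument. Both use $PAQ=D$ and the rule $(c^M)^N=c^{NM}$: the factor $Q$ supplies the isomorphism, and the factor $P$ is absorbed because $1^P=1^{P^{-1}}=1$. The paper's map $H_D\to H_A$, $u\mapsto u^Q$, is just the inverse of your $\phi$, with your Step 2 done inline, and both then read off $H_D=\mu_{d_1}\times\cdots\times\mu_{d_n}$ directly.
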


\begin{proof} 
Since $D=\text{diag}\{d_1,\dots,d_n\}$ is the Smith normal form of $A$, there exist invertible matrices $P,Q\in M_n(\mathbb Z)$ such that $PAQ=D$. Let $H_A=\{c\in (k^\ast)^n|\,c^A=1\}$ and $H_D=\{u\in (k^\ast)^n|\,u^D=1\}$ be the subgroups of $(k^*)^n$ associated with $A$ and $D$, respectively. Let $u\in H_D$. Then $u^D=1$. Since $D=PAQ$, it follows that $u^{PAQ}=1$. Applying $(-)^{P^{-1}}$ to both sides and using \Cref{lem c^M}(3), we have $(u^{PAQ})^{P^{-1}}=u^{P^{-1}PAQ}=u^{AQ}=1$. Now let $c=u^Q$. By \Cref{lem c^M}(3), $c^A=(u^Q)^A=u^{AQ}=1$. Hence $c\in H_A$. Thus the map $\varphi:H_D\longrightarrow H_A$, given by $\varphi(u)=u^Q$, is well defined. 

Next, we show that $\varphi$ is a group homomorphism. Let $u,v\in H_D$. Then $\varphi(uv)=(uv)^Q$. If $Q=[q_{ij}]$, then the $k$-th component of $(uv)^Q$ is $$\prod_{i=1}^n(u_iv_i)^{q_{ki}}=\left(\prod_{i=1}^nu_i^{q_{ki}}\right)\left(\prod_{i=1}^nv_i^{q_{ki}}\right).$$ Hence $(uv)^Q=u^Qv^Q$, and therefore $\varphi(uv)=\varphi(u)\varphi(v)$. Thus, $\varphi$ is a group homomorphism.

Next, we show that $\varphi$ is injective. Suppose that $\varphi(u)=\varphi(v)$. Then $u^Q=v^Q$. Applying $(-)^{Q^{-1}}$ to both sides and using \Cref{lem c^M}(3) and (4), we have $u=(u^Q)^{Q^{-1}}=(v^Q)^{Q^{-1}}=v$. Thus $\varphi$ is injective. To prove surjectivity, let $c\in H_A$, and define $u=c^{Q^{-1}}$. Then, by \Cref{lem c^M}(3) and (4), $u^Q=c$. Since $c\in H_A$, we have $c^A=1$. Hence $(u^Q)^A=1$, and therefore $u^{AQ}=1$.
Applying $(-)^{P}$ to both sides and using \Cref{lem c^M}(3), we obtain that $(u^{AQ})^P=u^{PAQ}=1$. Using $PAQ=D$, it follows that
$u^D=1$. Thus $u\in H_D$, and $\varphi(u)=u^Q=(c^{Q^{-1}})^Q=c^{QQ^{-1}}=c$. Therefore, $\varphi$ is surjective. Hence, $H_A\cong H_D$.

Since, $D=\text{diag}\{d_1,\dots,d_n\}$, we have $u^D=(u_1^{d_1},\dots,u_n^{d_n})$. Therefore, $u\in H_D$ if and only if $u_i^{d_i}=1$ for each $i=1,\dots,n$. Hence $H_D=\mu_{d_1}\times\cdots\times\mu_{d_n}$. Consequently, $H_A\cong \mu_{d_1}\times\cdots\times\mu_{d_n}$.
\end{proof}

\section{Special monomial derivations with $\beta_{ii}\geq 1$}\label{sec 3}

Let $n\in \mathbb{N}$. In this section, we study a class of special monomial derivations $d$ of $k[X]:=k[x_1,\dots, x_n]$ satisfying the property that $x_i$ divides $d(x_i)$ for all $i$, $1\leq i \leq n$. We give necessary and sufficient conditions for a $k$-automorphism of $k[X]$ to be in the isotropy group Aut$(k[X])_d$. Furthermore, in \Cref{final-iso-thm}, under the assumption that $\alpha_i=1$ for all $i$, we describe the structure of the group Aut$(k[X])_d$.

Note that for every such derivations $d$, $x_i$ is a Darboux polynomial of $d$ for all $1\leq i\leq n$. If we consider $\alpha_{i}=1$, $\beta_{ii}=1$ for all $ 1\leq i\leq n$, $\beta_{i(i-1)}=1$ for all $2\leq i\leq n$, $\beta_{1n}=1$, and $\beta_{ij}=0$ otherwise, then we get the Lotka-Volterra derivation.
As a consequence of Theorem \ref{final-iso-thm} (see Remark \ref{LV-remark}), we recover a result of Kour and Rewri (\cite{RK}) which states that the isotropy group of the Lotka-Volterra derivation is isomorphic to $\mathbb{Z}/n\mathbb{Z}$. 

Throughout this section, for any permutation $\sigma\in S_n$,  we denote by $P_{\sigma}:=[(P_{\sigma})_{ij}]\in M_n(k)$ the permutation matrix associated with $\sigma$, where $(P_{\sigma})_{ij}=\delta_{i\sigma(j)}$ for all $1\leq i,j \leq n.$ Note that for $\sigma,\tau \in S_n$, $(P_{\tau}P_{\sigma})_{ij}=\sum_{r=1}^{n}(P_{\tau})_{ir}(P_{\sigma})_{rj}=\sum_{r=1}^{n}\delta_{i\tau(r)}\delta_{r\sigma(j)}=\delta_{i\tau(\sigma(j))}=(P_{\tau\sigma})_{ij}$, for all $1\leq i,j \leq n$, that is, $P_{\tau\sigma}=P_{\tau}P_{\sigma}$. Consequently, $(P_{\sigma})^{-1}=P_{\sigma^{-1}}.$

 \begin{thm}\label{iso-thm}
Let $k$ be a field of characteristic zero, and let $d$ be the $k$-derivation of $k[x_1,\dots,x_n]$ defined by
$$d(x_i)= \alpha_i x_1^{\beta_{i1}}\cdots x_n^{\beta_{in}}, \quad i=1,\dots,n,$$ where $\alpha_i \in k^{\ast}$ and $\beta_{ij} \in \mathbb{Z}_{\ge 0}$ for all $1\leq i,j \leq n$. Let $A=[a_{ij}]$ be given by $a_{ij} = \beta_{ij} - \delta_{ij}$, and let $w_d := \det(A)$. Furthermore, assume that $\beta_{ii} \geq 1$ for all $1\leq i\leq n$, $w_d \neq 0$, and $k(X)^d=k$. Let $\rho \in \text{Aut}(k[X])$. Then $\rho \in \text{Aut}(k[X])_d$ if and only if there exist some $\sigma \in S_n$ and $(c_1,\dots,c_n)\in (k^*)^n$ such that $$\rho(x_i)=c_i x_{\sigma(i)}, \,\,\,P_\sigma A=AP_\sigma,\,\,\,\text{and}\,\,\,\prod_{t=1}^n c_t^{a_{it}} =\frac{\alpha_{\sigma(i)}}{\alpha_i} \,\,\, \text{for all}\; 1\leq i\leq n.$$
\end{thm}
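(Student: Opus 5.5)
The plan is to show that any $\rho\in\text{Aut}(k[X])_d$ must send each variable to a scalar multiple of a variable. Once that is known, both directions reduce to comparing monomials and coefficients in the identity $\rho d(x_i)=d\rho(x_i)$.

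\emph{Step 1: $\rho$ preserves Darboux polynomials.} Let $\rho\in\text{Aut}(k[X])_d$, and let $f$ be a Darboux polynomial with $d(f)=\Lambda f$. Then
$$d(\rho(f))=\rho(d(f))=\rho(\Lambda)\rho(f),$$
so $\rho(f)$ is again a Darboux polynomial. It is non-constant because $\rho$ is an automorphism, and it is irreducible whenever $f$ is. Each $x_i$ is a Darboux polynomial, since $\beta_{ii}\geq 1$ means $x_i$ divides $d(x_i)$. Hence each $\rho(x_i)$ is an irreducible Darboux polynomial of $d$.

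\emph{Step 2: $\rho$ is monomial in form.} Since $w_d\neq 0$ and $k(X)^d=k$, \Cref{lem1} says every irreducible Darboux polynomial of $d$ is a scalar multiple of a coordinate variable. So for each $i$ we get $\rho(x_i)=c_i x_{\sigma(i)}$ for some $c_i\in k^*$ and some map $\sigma$. Because $\rho$ is injective, the $\rho(x_i)$ are pairwise non-associate, so $\sigma$ is injective and therefore lies in $S_n$.

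\emph{Step 3: compare both sides of $\rho d(x_i)=d\rho(x_i)$.} With $\rho(x_i)=c_ix_{\sigma(i)}$ the two sides are
$$\alpha_i\Big(\prod_{j}c_j^{\beta_{ij}}\Big)\prod_j x_{\sigma(j)}^{\beta_{ij}}\qquad\text{and}\qquad c_i\,\alpha_{\sigma(i)}\prod_t x_t^{\beta_{\sigma(i)t}}.$$
Comparing monomials, and reindexing with $t=\sigma(j)$, gives $\beta_{ij}=\beta_{\sigma(i)\sigma(j)}$ for all $i,j$. To match this with the statement, compute the entries:
$$(P_\sigma B)_{ij}=\beta_{\sigma^{-1}(i)j},\qquad (BP_\sigma)_{ij}=\beta_{i\sigma(j)},\qquad B=[\beta_{ij}].$$
So the monomial condition is exactly $P_\sigma B=BP_\sigma$. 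Since $A=B-I$ and $P_\sigma$ commutes with $I$, this is equivalent to $P_\sigma A=AP_\sigma$.

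Comparing coefficients gives $\alpha_i\prod_j c_j^{\beta_{ij}}=c_i\alpha_{\sigma(i)}$. Dividing by $c_i$ (which is allowed since $c_i\neq 0$) turns this into
$$\prod_t c_t^{a_{it}}=\frac{\alpha_{\sigma(i)}}{\alpha_i}.$$

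\emph{Step 4: the converse.} Suppose $\rho(x_i)=c_ix_{\sigma(i)}$ with $\sigma\in S_n$ and all $c_i\in k^*$. Such a $\rho$ is an automorphism. The computation in Step 3 is reversible, so the two stated conditions give $\rho d(x_i)=d\rho(x_i)$ for every $i$. Two derivations agreeing on the generators agree everywhere; here $\rho d\rho^{-1}$ is a derivation equal to $d$ on each $x_i$, hence $\rho d=d\rho$.

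The only non-routine step is Step 2, namely forcing $\rho(x_i)$ to be a variable times a scalar. The hypotheses $w_d\neq 0$ and $k(X)^d=k$ enter solely through \Cref{lem1} at that point. Everything after it is bookkeeping with exponents.
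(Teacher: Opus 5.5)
Your proof is correct and follows essentially the same route as the paper. In both, each $\rho(x_i)$ is an irreducible Darboux polynomial, Theorem~\ref{lem1} then gives $\rho(x_i)=c_ix_{\sigma(i)}$, and one compares monomials and coefficients in $\rho d(x_i)=d\rho(x_i)$.

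The differences are minor bookkeeping. You compare $d(x_i)$ directly through $B=[\beta_{ij}]$, using $P_\sigma B=BP_\sigma\iff P_\sigma A=AP_\sigma$, where the paper compares the cofactors $\lambda_i$. You also make explicit two points the paper leaves implicit: that $\sigma$ is a genuine permutation because $\rho$ is injective, and that agreement on generators forces $\rho d=d\rho$.
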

\begin{proof}
     Let $\rho\in$ Aut$(k[X])_d$. Since $\beta_{ii}\geq 1$ for all $1\leq i \leq n$, we have $d(x_i)=\alpha_i {x_1}^{\beta_{i1}}\dots{x_n}^{\beta_{in}}=\lambda_ix_i$ where the cofactor $\lambda_i=\alpha_i{x_1}^{\beta_{i1}}\dots{x_i}^{\beta_{ii}-1}\dots{x_n}^{\beta_{in}}\in k[X].$ Thus $x_i$ is a Darboux polynomial of $d$ for all $i=1,\dots,n$. Since $d\rho=\rho d$, we have that for all $i=1,\dots,n$,
     \begin{gather}\label{eq1}
     d\rho(x_i)=\rho d(x_i)=\rho(\lambda_ix_i),\,\text{that is},\,\,d(\rho(x_i))=\rho(\lambda_i)\rho(x_i).
     \end{gather}
      Thus $\rho(x_i)$ is a Darboux polynomial for all $1\leq i\leq n$.
      
      Note that $\rho(x_i)$ is irreducible since $\rho$ is an automorphism of $k[X]$. Also, we have that $w_d\neq 0$ and $k(X)^d$ is trivial. Hence, by \Cref{lem1}, it follows that $\rho$ permutes the variables $x_1,\dots,x_n$ up to nonzero scalar multiples, that is, there exist $\sigma \in S_n$ and $c=(c_1,\dots,c_n)\in (k^*)^n$ such that
      \begin{equation}\label{eq2}
          \rho(x_i)=c_ix_{\sigma(i)}
      \end{equation}
    for all $i=1,\ldots,n$. We denote this automorphism by $\rho_{c,\sigma}$. Furthermore, we have 
    \begin{equation}\label{eq3}
    d(\rho_{c,\sigma}(x_i))=d(c_ix_{\sigma(i)})=c_id(x_{\sigma(i)})=c_i\lambda_{\sigma(i)}x_{\sigma(i)}
    \end{equation}
    and 
    \begin{equation}\label{eq4}
    \rho_{c,\sigma}(d(x_i))=\rho_{c,\sigma}(\lambda_i x_i)=\rho_{c,\sigma}(\lambda_i)\rho_{c,\sigma}(x_i)=\rho_{c,\sigma}(\lambda_i)\,c_ix_{\sigma(i)}.
     \end{equation}
     Thus, from the equality $d\rho_{c,\sigma}=\rho_{c,\sigma}d$, and Equations \ref{eq3} and \ref{eq4}, it follows that
     \begin{equation}\label{eq5}
         \lambda_{\sigma(i)}=\rho_{c,\sigma}(\lambda_i).
     \end{equation}
     Recall that $\lambda_i=\alpha_i {x_1}^{\beta_{i1}}\dots{x_i}^{\beta_{ii}-1}\dots{x_n}^{\beta_{in}} = \alpha_i \prod_{t=1}^n x_t^{\beta_{it}-\delta_{it}},$ where $\delta_{it}$ is the Kronecker delta function. Since $a_{it}=\beta_{it}-\delta_{it}$ for all $i,t=1,\ldots,n$, then $\lambda_i=\alpha_i \prod_{t=1}^n x_t^{a_{it}}$ for all $i,t=1,\ldots,n$. Therefore
    $$\rho_{c,\sigma}(\lambda_i) =\alpha_i \prod_{t=1}^n (\rho_{c,\sigma}(x_t))^{a_{it}}=\alpha_i\prod_{t=1}^n(c_tx_{\sigma(t)})^{a_{it}}=\alpha_i\left(\prod_{t=1}^nc_t^{a_{it}}\right)\left(\prod_{t=1}^n x_{\sigma(t)}^{a_{it}}\right),$$ for all $i$.
    Re-indexing, via $s=\sigma(t)$, we get that $$\rho_{c,\sigma}(\lambda_i)=\alpha_i\left(\prod_{t=1}^nc_t^{a_{it}}\right)\left(\prod_{s=1}^nx_s^{a_{i\sigma^{-1}(s)}}\right).$$
    On the other hand, we have $\lambda_{\sigma(i)}=\alpha_{\sigma(i)} \prod_{s=1}^n x_s^{a_{\sigma(i)s}}$ for all $i$. Therefore, by Equation~\ref{eq5},
    \begin{equation}\label{eq6}
    \alpha_{\sigma(i)}\prod_{s=1}^nx_s^{a_{\sigma(i)s}}=\alpha_i\left(\prod_{t=1}^nc_t^{a_{it}}\right)\left(\prod_{s=1}^n x_s^{a_{i\sigma^{-1}(s)}}\right),
    \end{equation} for all $i.$
    Now, comparing the exponents of all $x_s$ and coefficients of both sides of Equation~\ref{eq6}, we get that \begin{equation}\label{eq7}
         a_{\sigma(i)s}=a_{i\sigma^{-1}(s)}
    \end{equation} for all $i,s=1,\ldots ,n$, and
    \begin{equation*}
         \prod_{t=1}^n c_t^{a_{it}} = \frac{\alpha_{\sigma(i)}}{\alpha_i},
    \end{equation*} for all $i,t=1,\ldots ,n$, respectively.
    
    Next, we show that the matrix $P_{\sigma}$ and the matrix $A$ commute. Reindexing Equation~\ref{eq7}, via replacing $i$ with $\sigma^{-1}(i)$ and $s$ with $\sigma(s)$, we have that  $a_{\sigma(\sigma^{-1}(i))\sigma(s)}=a_{\sigma^{-1}(i)\sigma^{-1}(\sigma(s))}$, that is,
    \begin{equation}\label{eq 7'}
      a_{i\sigma(s)}=a_{\sigma^{-1}(i)s},  
    \end{equation}
     for all $1\leq i,s\leq n$. Recall that $P_{\sigma}:=[(P_{\sigma})_{ij}]=[\delta_{i\sigma(j)}]$ denotes the permutation matrix associated with $\sigma$. Then the $is$-th entry of the matrix $P_{\sigma}A$ is given by $(P_{\sigma}A)_{is}= \sum_{r=1}^{n}(P_{\sigma})_{ir}a_{rs}=\sum_{r=1}^{n}\delta_{i\sigma(r)}a_{rs}=a_{\sigma^{-1}(i)s}$. Similarly, the $is$-th entry of the matrix $AP_{\sigma}$ is given by $(AP_{\sigma})_{is}=\sum_{r=1}^{n}a_{ir}(P_{\sigma})_{rs}=\sum_{r=1}^{n}a_{ir}\delta_{r\sigma(s)}=a_{i\sigma(s)}$. From Equation~\ref{eq 7'}, we have that $a_{i\sigma(s)}=a_{\sigma^{-1}(i)s}$ for all $i,s$, that is, $(AP_{\sigma})_{is}=(P_{\sigma}A)_{is}$ for all $1\leq i,s \leq n$. Hence $P_{\sigma}A=AP_{\sigma}$.
     
     Conversely, suppose that $\sigma \in S_n$ satisfies $P_\sigma A = A P_\sigma$, and let $(c_1,\dots,c_n)\in (k^*)^n$ satisfy $\prod_{t=1}^n c_t^{a_{it}}=\frac{\alpha_{\sigma(i)}}{\alpha_i}$ for all $i$. Consider the automorphism $\rho \in$ Aut$(k[X])$ defined by $\rho(x_i):=c_i x_{\sigma(i)}$ for all $i=1,\ldots ,n$. We claim that $\rho$ commutes with $d$. For all $i=1\ldots,n$, we have that
    \begin{equation}\label{conequ1}
\rho(d(x_i))=\rho\left(\alpha_i\prod_{t=1}^nx_t^{\beta_{it}}\right)=\alpha_i\prod_{t=1}^n(c_tx_{\sigma(t)})^{\beta_{it}}=\alpha_i \left(\prod_{t=1}^n c_t^{\beta_{it}}\right)\left(\prod_{t=1}^n x_{\sigma(t)}^{\beta_{it}}\right).
    \end{equation}
    Since $a_{it}=\beta_{it}-\delta_{it}$ for all $i,t=1,\ldots,n$, using the defining condition on $c=(c_1,\dots,c_n)$, we have that
    \begin{equation}\label{conequ2}
        \prod_{t=1}^n c_t^{\beta_{it}}=c_i \cdot \prod_{t=1}^n c_t^{a_{it}}= \frac{c_i\,\alpha_{\sigma(i)}}{\alpha_i},
    \end{equation} for all $i.$
    Moreover, reindexing the monomial factor via $s=\sigma(t)$, we have that
    \begin{equation}\label{conequ3}
    \prod_{t=1}^nx_{\sigma(t)}^{\beta_{it}}=\prod_{s=1}^nx_s^{\beta_{i\sigma^{-1}(s)}},
    \end{equation} for all $i.$
    We now use the condition on the matrix $A$. The equality $P_\sigma A = A P_\sigma$ implies that $a_{\sigma^{-1}(i)s}=a_{i\sigma(s)}$ for all $1\leq i,s\leq n$. On reindexing, we have that $a_{i\sigma^{-1}(s)}=a_{\sigma(i)s}$ for all $i$ and $s$. Substituting $a_{ij}=\beta_{ij}-\delta_{ij}$, we have $\beta_{i\sigma^{-1}(s)}-\delta_{i\sigma^{-1}(s)}=\beta_{\sigma(i)s}-\delta_{\sigma(i)s}$. Since $\delta_{i\sigma^{-1}(s)} = \delta_{\sigma(i)s}$, it follows that $\beta_{i\sigma^{-1}(s)}=\beta_{\sigma(i)s}$. Thus,
    \begin{equation}\label{conequ4}
         \prod_{s=1}^n x_s^{\beta_{i\sigma^{-1}(s)}}=\prod_{s=1}^n x_s^{\beta_{\sigma(i)s}},
    \end{equation} for all $i.$
    Using Equation~\ref{conequ2}, Equation~\ref{conequ3} and Equation~\ref{conequ4}, Equation~\ref{conequ1} becomes
    \begin{equation}\label{conequ5}
        \rho(d(x_i))=c_i\alpha_{\sigma(i)}\prod_{s=1}^n x_s^{\beta_{\sigma(i)s}},
    \end{equation} for all $i.$
    Since $d(x_i)=\alpha_i \prod_{s=1}^n x_s^{\beta_{is}}$ for all $i$, we have that
    \begin{equation}\label{conequ6}
        d(x_{\sigma(i)})=\alpha_{\sigma(i)} \prod_{s=1}^n x_s^{\beta_{\sigma(i)s}},
    \end{equation} for $\sigma\in S_n$ and for all $i$. Then it follows from Equation~\ref{conequ5} and Equation~\ref{conequ6} that
    \begin{equation}\label{conequ7}
        \rho(d(x_i))=c_i\,d(x_{\sigma(i)}).
    \end{equation}
    Since $\rho(x_i)=c_ix_{\sigma(i)}$ for all $i$, it follows from Equation~\ref{conequ7} that $\rho(d(x_i))=c_i\,d(x_{\sigma(i)})=d(c_ix_{\sigma(i)})=d(\rho(x_i))$ for all $i$. Hence $d\rho=\rho d$, and therefore $\rho \in$ Aut$(k[X])_d$.
\end{proof}
Next, we show that if $\alpha_i=1$ for all $i=1,\ldots,n$, then Aut$(k[X])_d$ itself admits a natural semidirect product structure. Let $G$ be a group, and let $G_1$ and $G_2$ be subgroups of $G$. Recall that $G$ is called the \emph{internal semidirect product} of $G_1$ and $G_2$, denoted by $G=G_1\rtimes G_2$, if $G_1$ is a normal subgroup of $G$, $G_1\cap G_2$ is the trivial subgroup of $G$, and $G=G_1G_2$. In this case, every element of $G$ can be written uniquely in the form $g_1g_2$, where $g_1\in G_1$ and $g_2\in G_2$. Furthermore, for $g_1,g'_1\in G_1$ and $g_2,g'_2\in G_2$, we have $(g_1g_2)(g'_1 g'_2)= g_1(g_2 g'_1 g_2^{-1})g_2 g'_2$.

\begin{thm}\label{sem-dir-product}
Let $d$ and $A$ be as in Theorem~\ref{iso-thm}. Furthermore, assume that $\alpha_i=1$ for all $i=1,\dots,n$. Let us define
$$\widetilde{H} :=\left\{\varphi_c \in \text{Aut}(k[X])\, |\, c=(c_1,\dots,c_n)\in (k^*)^n,\ \varphi_c(x_i)=c_i x_i \ \text{and}\ \prod_{t=1}^n c_t^{a_{it}}=1 \  \forall \ i \ \right\},$$
and
$$\widetilde{K} :=\left\{\psi_\sigma \in \text{Aut}(k[X])\ |\ \sigma\in S_n,\ \psi_\sigma(x_i)=x_{\sigma(i)} \ \forall\ i,\ \text{and}\ P_\sigma A = A P_\sigma\right\}.$$ Then $\widetilde{K}$ is a subgroup of $\text{Aut}(k[X])_d$, $\widetilde{H}$ is an abelian normal subgroup of $\text{Aut}(k[X])_d$, and $\text{Aut}(k[X])_d$ is the internal semidirect product $\widetilde{H} \rtimes \widetilde{K}$.
\end{thm}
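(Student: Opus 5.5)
The plan is to derive everything from \Cref{iso-thm}, specialized to $\alpha_i=1$. The condition $\prod_t c_t^{a_{it}}=\alpha_{\sigma(i)}/\alpha_i$ then becomes $\prod_t c_t^{a_{it}}=1$ for all $i$, i.e.\ $c^A=1$, i.e.\ $c\in H_A$ in the notation of \Cref{struc lem H_A}. So \Cref{iso-thm} says
\[
\text{Aut}(k[X])_d=\{\rho_{c,\sigma}: c\in H_A,\ \sigma\in S_n,\ P_\sigma A=AP_\sigma\},
\]
where $\rho_{c,\sigma}(x_i)=c_ix_{\sigma(i)}$. In particular $\widetilde H=\{\rho_{c,\mathrm{id}}\}$ (take $\sigma=\mathrm{id}$, which commutes with $A$) and $\widetilde K=\{\rho_{1,\sigma}\}$ (take $c=1\in H_A$) are both subsets of $\text{Aut}(k[X])_d$.

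First, I check the subgroup properties.
\begin{itemize}
\item For $\widetilde H$: $\varphi_c\varphi_{c'}=\varphi_{cc'}$ and $\varphi_c^{-1}=\varphi_{c^{-1}}$. By \Cref{lem c^M}(1),(2), $H_A$ is a subgroup of $(k^*)^n$, and $c\mapsto\varphi_c$ is an injective homomorphism. Hence $\widetilde H\cong H_A$ is an abelian subgroup.
\item For $\widetilde K$: I compute $\psi_\tau\psi_\sigma(x_i)=\psi_\tau(x_{\sigma(i)})=x_{\tau\sigma(i)}$, so $\psi_\tau\psi_\sigma=\psi_{\tau\sigma}$. The set of $\sigma$ with $P_\sigma A=AP_\sigma$ is closed under products and inverses, using $P_{\tau\sigma}=P_\tau P_\sigma$ and $P_\sigma^{-1}=P_{\sigma^{-1}}$. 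Here I will be careful about composition order, since automorphisms act on variables and the opposite group may appear. In this case the order works out directly, so $\widetilde K$ is a subgroup.
\end{itemize}

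Next, I verify the three semidirect product conditions.
\begin{itemize}
\item Product: $\rho_{c,\sigma}=\varphi_c\psi_\sigma$. Indeed $\varphi_c(\psi_\sigma(x_i))=\varphi_c(x_{\sigma(i)})=c_{\sigma(i)}x_{\sigma(i)}$, which is not $c_ix_{\sigma(i)}$. The correct order is $\psi_\sigma\varphi_c(x_i)=c_ix_{\sigma(i)}$, so $\rho_{c,\sigma}=\psi_\sigma\varphi_c$. Alternatively, $\rho_{c,\sigma}=\varphi_{c'}\psi_\sigma$ with $c'_{\sigma(i)}=c_i$. For this I must check that $c'\in H_A$ whenever $c\in H_A$ and $P_\sigma$ commutes with $A$. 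Writing $c'=c^{P_\sigma^{-1}}$ (or $c^{P_\sigma}$, depending on convention) and using \Cref{lem c^M}(3), I get $c'^A=c^{AP}=c^{PA}=(c^A)^P=1$. Hence $\text{Aut}(k[X])_d=\widetilde H\widetilde K$ (equally $\widetilde K\widetilde H$).
\item Trivial intersection: if $\varphi_c=\psi_\sigma$, comparing images of $x_i$ gives $\sigma=\mathrm{id}$ and $c=1$.
\item Normality: I compute $\psi_\sigma\varphi_c\psi_\sigma^{-1}(x_i)=\psi_\sigma\varphi_c(x_{\sigma^{-1}(i)})=c_{\sigma^{-1}(i)}x_i$. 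This is $\varphi_{c''}$ with $c''$ a permutation of $c$, and $c''\in H_A$ by the same computation as in the product step. Conjugation by elements of $\widetilde H$ is trivial since $\widetilde H$ is abelian. Since every element is a product of elements of $\widetilde H$ and $\widetilde K$, $\widetilde H$ is normal.
\end{itemize}

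The main obstacle is the index bookkeeping: showing that a permuted vector $c\circ\sigma^{\pm1}$ stays in $H_A$. I will do this directly rather than through the $c^M$ formalism. Suppose $\prod_t c_t^{a_{it}}=1$ for all $i$, and set $c''_t=c_{\sigma^{-1}(t)}$. Then
\[
\prod_t c''^{\,a_{it}}_t=\prod_s c_s^{a_{i\sigma(s)}}=\prod_s c_s^{a_{\sigma^{-1}(i)s}}=1,
\]
using Equation~\eqref{eq 7'}, which is equivalent to $P_\sigma A=AP_\sigma$. The analogous computation with $\sigma^{-1}$ uses Equation~\eqref{eq7}. This establishes $\text{Aut}(k[X])_d=\widetilde H\rtimes\widetilde K$.
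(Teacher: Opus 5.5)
Your proposal is correct and follows essentially the same route as the paper. Both specialize Theorem~\ref{iso-thm} to $\alpha_i=1$, write $\rho_{c,\sigma}=\varphi_{c'}\circ\psi_\sigma$ with $c'_j=c_{\sigma^{-1}(j)}$, verify $c'\in H_A$ via $a_{i\sigma(s)}=a_{\sigma^{-1}(i)s}$, get $\widetilde H\cap\widetilde K=\{e\}$ by comparing images of the $x_i$, and deduce normality from $\psi_\sigma\varphi_c\psi_\sigma^{-1}=\varphi_{c'}$. Your extra check via Lemma~\ref{lem c^M}(3) is also valid, and with the paper's convention $c^{P_\sigma}=c'$ exactly, so the convention ambiguity you hedged on does not arise.
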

\begin{proof}
Let $L:=$Aut$(k[X])_d$. Let $e\in \text{Aut}(k[X])_d$ denote the identity automorphism of $k[X]$. We first show that $\widetilde{H}$ is an abelian subgroup of $L$. Note that $e=\varphi_{1}$ where $1=(1,\ldots,1)\in (k^\ast)^n$. Clearly, $\varphi_{1} \in \widetilde{H}.$ Thus $\widetilde{H}\neq \emptyset$. Let $\varphi_c,\varphi_d\in \widetilde{H}$ where $c=(c_1,\dots,c_n), d=(d_1,\dots,d_n)\in(k^*)^n$. Then $(\varphi_c\circ\varphi_d)(x_i)=\varphi_c(d_ix_i)=d_ic_ix_i=c_id_ix_i$. Thus we have $\varphi_c\circ\varphi_d=\varphi_{cd}$, where $cd=(c_1d_1,\dots,c_nd_n)$. Since $\varphi_c,\varphi_d \in \widetilde{H}$, we have that for each $i$, $$\prod_{t=1}^n (c_td_t)^{a_{it}}=\left(\prod_{t=1}^n c_t^{a_{it}}\right)\left(\prod_{t=1}^n d_t^{a_{it}}\right)=1.$$ Similarly, note that $\varphi_c\circ\varphi_d =\varphi_{cd}\in \widetilde{H}$. Thus, $\varphi_d\circ\varphi_c=\varphi_{cd}=\varphi_c\circ\varphi_d$. Thus, any two elements of $\widetilde{H}$ commute. Let $\varphi_c \in \widetilde{H}$ where $c=(c_1,\ldots,c_n)\in (k^\ast)^n$. Recall that for $c=(c_1,\ldots,c_n)\in (k^\ast)^n$, we denote by $c^{-1}=(c_1^{-1},\ldots,c_n^{-1})$ the inverse of $c$ in $(k^\ast)^n$. We have that for each $i$, $$\prod_{t=1}^n (c_t^{-1})^{a_{it}}=\left(\prod_{t=1}^n c_t^{a_{it}}\right)^{-1}=1.$$ Thus $\varphi_{c^{-1}}\in \widetilde{H}$. We have that $\varphi_c\circ\varphi_{c^{-1}}=\varphi_1=\varphi_{c^{-1}}\circ\varphi_c$. Thus, $\varphi_{c^{-1}}$ is the inverse of $\varphi_c$ in $\widetilde{H}$. Therefore, $\widetilde{H}$ is an abelian subgroup of $\text{Aut}(k[X])$.

Next, we show that $\widetilde{K}$ is a subgroup of Aut$(k[X]).$ Note that $e=\psi_{Id}$ where $Id\in S_n$ denote the identity permutation. Clearly, $\psi_{1} \in \widetilde{K}.$ Thus $\widetilde{K}\neq \emptyset$.Let $\psi_{\sigma},\psi_{\tau}\in \widetilde{K}$. Then $P_{\sigma}A=AP_{\sigma}$ and $P_{\tau}A=AP_{\tau}$. We have $(\psi_\sigma\circ\psi_\tau)(x_i)=\psi_\sigma(x_{\tau(i)})=x_{\sigma(\tau(i))}=\psi_{\sigma\tau}(x_i),$ and hence $\psi_\sigma\circ\psi_\tau=\psi_{\sigma\tau}.$ Recall that $P_{\sigma\tau}= P_\sigma P_\tau$. Thus we have $P_{\sigma\tau}A=P_{\sigma}P_{\tau}A=P_{\sigma}AP_{\tau}=AP_{\sigma}P_{\tau}=AP_{\sigma\tau}$, which implies that $\psi_\sigma\circ\psi_\tau=\psi_{\sigma\tau}\in \widetilde{K}$. We have that $\psi_{\sigma}\circ\psi_{\sigma^{-1}}=\psi_{\text{Id}}=\psi_{\sigma^{-1}}\circ\psi_{\sigma}$. Let $\psi_{\sigma}\in \widetilde{K}$. Then we have that $P_{\sigma}A=AP_{\sigma},$ which implies that $A(P_{\sigma})^{-1}= (P_{\sigma})^{-1}A$. Recall that $(P_{\sigma})^{-1}=P_{\sigma^{-1}}$. Thus it follows that $AP_{\sigma^{-1}}=P_{\sigma^{-1}}A$. Thus $\psi_{\sigma^{-1}}\in \widetilde{K}$ is the inverse of $\psi_{\sigma}$. Hence $\widetilde{K}$ is a subgroup of $\text{Aut}(k[X])$.

By Theorem~\ref{iso-thm}, both $\varphi_c$ and $\psi_\sigma$ commute with $d$. Hence $\widetilde{H}$ and $\widetilde{K}$ are subgroups of $L$.

Now let $\rho\in L$. By Theorem~\ref{iso-thm}, there exist $c=(c_1,\dots,c_n)\in (k^*)^n$ and $\sigma\in S_n$ such that $\rho(x_i)=c_i x_{\sigma(i)}$, with  $\prod_{t=1}^n c_t^{a_{it}}=1 \,\ \ \text{and} \,\ \,P_\sigma A=AP_\sigma$. Write $\rho=\varphi_{\sigma^{-1}(c)}\circ \psi_\sigma$, where $\sigma^{-1}(c):=(c_{\sigma^{-1}(1)},\ldots ,c_{\sigma^{-1}(n)})\in (k^*)^n$. Recall that $P_\sigma A=AP_\sigma$ implies that $a_{i\sigma(s)}=a_{\sigma^{-1}(i)s}$ for all $1 \leq i,\, s\leq n$. Since $\varphi_c\in \widetilde{H}$, we have $$\prod_{t=1}^n (c_{\sigma^{-1}(t)})^{a_{it}}=\prod_{s=1}^nc_s^{a_{i\sigma(s)}}=\prod_{s=1}^n c_s^{a_{\sigma^{-1}(i)s}}=1,$$ for all $1\leq i\leq n.$ Hence $\varphi_{\sigma^{-1}(c)} \in \widetilde{H}$ and $\psi_{\sigma}\in \widetilde{K}$. Therefore $L=\widetilde{H}\widetilde{K}$. Let us consider $\rho=\varphi_{\sigma^{-1}(c)}\circ \psi_\sigma=\varphi_{\tau^{-1}(d)} \circ \psi_\tau$, for some $\psi_\sigma, \psi_\tau \in \widetilde{K}$ and $\varphi_{\sigma^{-1}(c)}, \varphi_{\tau^{-1}(d)} \in \widetilde{H}$. Then $(\varphi_{\tau^{-1}(d)})^{-1}\circ \varphi_{\sigma^{-1}(c)}(x_i)=\psi_\tau \circ (\psi_\sigma)^{-1}(x_i)$ for all $i$, that is, $\frac{c_{\sigma^{-1}(i)}}{d_{\tau^{-1}(i)}} x_i= x_{\tau\sigma^{-1}(i)}$. Hence, $\sigma(i)=\tau(i),$ and $c_i=d_i$ for all $i$, that is, $\sigma=\tau$ and $c=d$. Therefore, $\psi_\sigma=\psi_{\tau}$ and $\varphi_{\sigma^{-1}(c)}=\varphi_{\tau^{-1}(d)}$.

Thus it follows that $\widetilde{H}\cap \widetilde{K}=\{e\}$.

Next, we show that $\widetilde{H}$ is a normal subgroup of $L$. Note that by previous paragraph, every element $\rho$ in $L$ can be uniquely written as $\varphi_{\sigma^{-1}(d)}\circ \psi_\sigma$ for some $\varphi_{\sigma^{-1}(d)}\in \widetilde{H}$ and $\psi_\sigma\in \widetilde{K}$ for some $d\in (k^*)^n$ and $\sigma \in S_n$. Thus for $\rho\in L$ and $\varphi_c\in \widetilde{H}$, we have that $\rho\circ\varphi_c\circ\rho^{-1}=(\varphi_{\sigma^{-1}(d)}\circ\psi_\sigma)\circ\varphi_c\circ(\varphi_{\sigma^{-1}(d)}\circ \psi_\sigma)^{-1}$. Thus, to show that $\widetilde{H}\trianglelefteq L$, it is suffices to show that for any $\psi_\sigma\in \widetilde{K}$ and $\varphi_c\in \widetilde{H}$, $\psi_\sigma\circ\varphi_c\circ\psi_\sigma^{-1}\in \widetilde{H}.$ We have $(\psi_\sigma\varphi_c\psi_\sigma^{-1})(x_i)=\psi_\sigma\big(c_{\sigma^{-1}(i)}x_{\sigma^{-1}(i)}\big)=c_{\sigma^{-1}(i)}x_i.$ Hence $\psi_\sigma\varphi_c\psi_\sigma^{-1}=\varphi_{\sigma^{-1}(c)}\in \widetilde{H},$ where $\sigma^{-1}(c)=(c_{\sigma^{-1}(1)},\ldots,c_{\sigma^{-1}(n)}).$ Hence $\widetilde{H}\trianglelefteq L$.

Thus $L=\widetilde{H}\widetilde{K}$, $\widetilde{H}\trianglelefteq L$, and $\widetilde{H}\cap \widetilde{K}=\{e\}$, that is, $L$ is the internal semidirect product of $\widetilde{H}$ and $\widetilde{K}$.
\end{proof}

\begin{rem}\label{rem1}
    Suppose $d$ and $A$ be as in Theorem~\ref{iso-thm} and $\widetilde{H}$ with $\alpha_i=1$ for all $i=1,\dots,n$, and $\widetilde{K}$ be as in Theorem~\ref{sem-dir-product}. Let $$ H := \left\{c=(c_1,\dots,c_n)\in (k^*)^n \,|\,\prod_{t=1}^n c_t^{a_{it}} = 1 \, \text{for all} \, i \right\},$$ and $K := \left\{ \sigma \in S_n \,|\, P_\sigma A = A P_\sigma \right\}$. Then $H$ is a subgroup of $(k^*)^n$ and is isomorphic to $\widetilde{H}$ via the morphism $f :\widetilde{H} \to H$ given by $f(\varphi_c)=c=(c_1,\dots,c_n)$. Also, $K$ (the centralizer of $A$) is a subgroup of $S_n$. Then $\widetilde{K}$ and is isomorphic $K$ via the morphism $g: \widetilde{K}\to K$ given by $g(\psi_\sigma)=\sigma$. Consider the semidirect product $H \rtimes K$, where the action of $K$ on $H$ is defined by $\sigma \cdot c=(c_{\sigma^{-1}(1)},\dots,c_{\sigma^{-1}(n)})$. Then there is an isomorphism $\widetilde{H} \widetilde{K} \cong H\rtimes K$, induced by the isomorphisms of $\widetilde{H}$ with $H$ and $\widetilde{K}$ with $K,$ together with the corresponding actions on the pairs $(\widetilde{H},\widetilde{K})$ and $(H,K)$. In fact, for any $c\in (k^{\ast})^n$ and $\sigma \in S_n$, the conjugation action in the internal semidirect product gives $(\psi_{\sigma}\cdot \varphi_c)(x_i)=\psi_{\sigma}\varphi_c\psi_{\sigma}^{-1}(x_i)=\psi_{\sigma}\varphi_c\psi_{\sigma^{-1}}(x_i)=\psi_{\sigma}(c_{\sigma^{-1}(i)}x_{\sigma^{-1}(i)})=c_{\sigma^{-1}(i)}x_i$, which corresponds to the action $\sigma \cdot c=(c_{\sigma^{-1}(1)},\dots,c_{\sigma^{-1}(n)})$ in the group $H\rtimes K$. 
\end{rem}

Note that the subgroup $H$ is completely described by the matrix $A=[a_{ij}]$. Thus for any matrix $A\in$ M$_{n\times n}(\mathbb{Z})$, as in Theorem~\ref{iso-thm}, we denote the associated subgroup $H$ by $H_A$. Recall that for any matrix $M\in$ M$_n(\mathbb{Z})$ and $c=(c_1,\ldots ,c_n)$, we have $$c^M=\left(\prod_{i=1}^n c_i^{m_{1i}},\dots,\prod_{i=1}^n c_i^{m_{ni}}\right).$$ Thus, for the matrix $A$ the subgroup $H_A=\{c\in (k^\ast)^n\,|\,c^M=1\}$, where $1=(1,\ldots,1)$.

Let $A$ be a matrix as in Theorem~\ref{iso-thm}, and let $H_A$ be the subgroup of $(k^*)^n$ associated with $A$. Suppose, if $c=(c_1,\dots,c_n)\in H_A$ then $c^A=1=(1,\dots,1)$. Now, using the identity adj$(A)A=(\det(A))I$, we get that $(c^A)^{\text{adj}(A)} = c^{(\det (A)I)}$. Hence $c_i^{\det(A)}=1$ for all $1\leq i \leq n$, so $c_i\in \mu_{\det(A)}$. Thus $H_A \subseteq \mu_{\det(A)}^n,$ which is finite. Therefore, $H_A$ is finite.

\begin{thm}\label{final-iso-thm}
    Let $d$, $A$ and $K$ be as in Theorem~\ref{sem-dir-product} and Remark~\ref{rem1}. Let $D=\text{diag}\{d_1,\dots,d_n\}$ be the Smith normal form of $A$ with $d_i\in \mathbb{N}$ for all $i=1,\ldots ,n$. Then $$\text{Aut}(k[X])_d\cong (\mu_{d_1}\times\cdots\times\mu_{d_n})\rtimes K,$$ where $\mu_{d_i}=\{\zeta\in k^\ast\,|\,\zeta^{d_i}=1\}$ is subgroup of $d_i$-th roots of unity, for all $1\leq i \leq n.$
\end{thm}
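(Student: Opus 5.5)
The proof will combine \Cref{sem-dir-product}, \Cref{rem1} and \Cref{struc lem H_A}. By \Cref{sem-dir-product}, $\text{Aut}(k[X])_d=\widetilde{H}\rtimes\widetilde{K}$ is an internal semidirect product. \Cref{rem1} upgrades this to an isomorphism $\text{Aut}(k[X])_d\cong H_A\rtimes K$. Here $H_A=\{c\in(k^*)^n\mid c^A=1\}$, and $K$ acts on $H_A$ by $\sigma\cdot c=(c_{\sigma^{-1}(1)},\dots,c_{\sigma^{-1}(n)})$. The isomorphism is $\varphi_{c'}\circ\psi_\sigma\mapsto(c',\sigma)$, and it respects multiplication because the conjugation $\psi_\sigma\varphi_c\psi_\sigma^{-1}=\varphi_{\sigma\cdot c}$ matches the action defining the external product. \Cref{struc lem H_A} gives an isomorphism $\varphi:H_D=\mu_{d_1}\times\cdots\times\mu_{d_n}\to H_A$, $u\mapsto u^Q$, where $PAQ=D$.

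First, I will check that the action of $K$ on $H_A$ is well defined, i.e.\ that $\sigma\cdot c\in H_A$ for $c\in H_A$ and $\sigma\in K$. This is the computation already carried out in the proof of \Cref{sem-dir-product} using $a_{i\sigma(s)}=a_{\sigma^{-1}(i)s}$. In matrix language, $\sigma\cdot c=c^{P_\sigma}$ up to the convention $(P_\sigma)_{ij}=\delta_{i\sigma(j)}$. Then \Cref{lem c^M}(3) gives $(c^{P_\sigma})^A=c^{AP_\sigma}=c^{P_\sigma A}=(c^A)^{P_\sigma}=1$. Along the way I will verify that this is a left action, which is immediate from $P_{\sigma\tau}=P_\sigma P_\tau$.

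Next, I will transport the action along $\varphi$. On $H_D$ I define $\sigma\ast u:=\varphi^{-1}(\sigma\cdot\varphi(u))=u^{Q^{-1}P_\sigma Q}$, again using \Cref{lem c^M}(3). This is an action of $K$ on $\mu_{d_1}\times\cdots\times\mu_{d_n}$ by group automorphisms. The map $(u,\sigma)\mapsto(\varphi(u),\sigma)$ is then an isomorphism $H_D\rtimes_\ast K\to H_A\rtimes K$. To see this, I will use the general fact that isomorphic normal factors with intertwined actions give isomorphic semidirect products, and check the multiplication rule $(u,\sigma)(u',\tau)=(u\,(\sigma\ast u'),\sigma\tau)$ directly. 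Composing the isomorphisms yields the claimed $\text{Aut}(k[X])_d\cong(\mu_{d_1}\times\cdots\times\mu_{d_n})\rtimes K$.

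I do not expect any step to be a real obstacle; the proof is bookkeeping. The one point needing care is that the semidirect product in the statement is with respect to the transported action $\ast$, not the naive coordinate permutation of the $\mu_{d_i}$. I would note this explicitly, and also record that $\mu_{d_i}$ may be smaller than the abstract cyclic group $\mathbb{Z}/d_i\mathbb{Z}$ if $k$ lacks roots of unity. Neither issue affects the isomorphism, since both sides are defined over $k$.
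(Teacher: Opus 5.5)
Your proposal is correct and follows the same route as the paper, whose proof simply combines Theorem~\ref{sem-dir-product}, Remark~\ref{rem1} and Lemma~\ref{struc lem H_A}. The paper leaves one point implicit: which action the semidirect product in the statement uses. You make it explicit by checking $\sigma\cdot c=c^{P_\sigma}$ and transporting the $K$-action to $\mu_{d_1}\times\cdots\times\mu_{d_n}$ via $\sigma\ast u=u^{Q^{-1}P_\sigma Q}$.
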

\begin{proof}
    The proof follows from Theorem~\ref{sem-dir-product}, Remark~\ref{rem1} and Lemma~\ref{struc lem H_A}. 
\end{proof}
Next, we give an application of the \Cref{iso-thm}, c.f., Corollary~\ref{ex1}. Furthermore, as a consequence of Corollary~\ref{ex1} we obtain a partial result of \cite{RK}. First, we recall the following proposition from \cite{NZ}.

\begin{prp}\cite[Proposition 12.4]{NZ}\label{prop1}
   Let $n\geq 3$ and $k[X]:=k[x_1,\dots,x_n]$.  Let $d$ denote the $k$-derivation of $k[X]$  given by $d(x_n)=x_nx_1$ and $d(x_i)=x_ix_{i+1}$ for all $1\leq i\leq n-1$. Then $d$ does not admit any other Darboux polynomial than the products of powers of coordinate functions. As a consequence, the subfield $k(X)^d=k.$
\end{prp}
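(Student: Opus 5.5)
The statement just above is Proposition~\ref{prop1}, quoted from \cite{NZ}. Its consequence $k(X)^d=k$ would follow from \Cref{lem1} once the Darboux claim is proved. For that we need $w_d\neq 0$. Here $A=[\beta_{ij}]-I$ has $a_{ii}=0$, $a_{i,i+1}=1$ for $i<n$, $a_{n1}=1$, and all other entries $0$. So $A$ is the permutation matrix of an $n$-cycle, $\det(A)=\pm1\neq 0$, and condition (1) of \Cref{lem1} implies condition (2).

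The real content is the Darboux claim, which I would prove by a degree and weight argument. Let $f$ be an irreducible Darboux polynomial, $d(f)=\Lambda f$. Since $d$ raises degree by exactly one (it is homogeneous of degree $1$), $\Lambda$ is a linear form, and we may take $f$ homogeneous by passing to its leading form. Suppose $f$ is not a variable, so no $x_i$ divides $f$.

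I would then specialize. Setting $x_j=0$ for a single $j$ is compatible with $d$, because the ideal $(x_j)$ is $d$-invariant. This gives a Darboux polynomial $\bar f\neq 0$ of the induced derivation on $n-1$ variables, with cofactor $\bar\Lambda$. Comparing coefficients of the monomials $x_1^{m}$ etc. (or evaluating on the torus orbit) pins down $\Lambda=\sum_i \lambda_i x_i$. Concretely, the derivation preserves the $\mathbb{Z}^n$-grading twisted by $A$: writing $f=\sum_\gamma f_\gamma x^\gamma$, we have $d(x^\gamma)=x^\gamma\sum_i \gamma_i x_{i+1}$, with indices mod $n$. I would compare extremal monomials of $f$ in a lexicographic order to get $\lambda_{i+1}=\gamma_i$ for the extremal exponent $\gamma$. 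Applying this to several extremal monomials forces all monomials of $f$ to share one exponent vector. Then $f$ is a monomial, and irreducibility makes it a variable.

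The main obstacle is making the extremal-monomial comparison rigorous. The equation $\sum_\gamma f_\gamma x^\gamma \sum_i \gamma_i x_{i+1}=\sum_\gamma f_\gamma x^\gamma\sum_i\lambda_i x_i$ gives, for each monomial, a linear relation among the coefficients $f_\gamma(\gamma_{i}-\lambda_{i+1})$ over pairs differing by $e_{i+1}$. The cancellations are governed by a cyclic structure, and the argument fails for small $n$: for $n=2$ the derivation has the first integral $x_1-x_2$. This explains the hypothesis $n\ge 3$. Following \cite{NZ}, I would first try induction on $n$ via the specialization $x_n=0$, reducing to a chain derivation for which the claim is easier, and then lift using homogeneity.
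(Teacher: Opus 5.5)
The paper gives no proof of this proposition (it is quoted from \cite{NZ}), so your argument has to stand on its own. The outer steps are fine. Since $\det A=\pm1$, \Cref{lem1} turns the Darboux claim into $k(X)^d=k$. Since $d$ is homogeneous of degree one, $\Lambda=\sum_i\lambda_ix_i$ is a linear form and every homogeneous component of $f$ is Darboux with the same cofactor; this, rather than ``passing to the leading form'' (which loses irreducibility), is the right reduction.

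But the actual content, that a homogeneous Darboux polynomial is a monomial, is not proved, and the step you state fails as written. Comparing coefficients of $x^\delta$ in $d(f)=\Lambda f$ gives
\[
\sum_{i=1}^n f_{\delta-e_i}\,(\delta_{i-1}-\lambda_i)=0\qquad(\text{indices mod } n).
\]
Let $\gamma$ be the lex-largest exponent of $f$ for $x_1>\cdots>x_n$. The equations at $\delta=\gamma+e_k$ yield $\lambda_1=\gamma_n$ and $\lambda_k=\gamma_{k-1}$ for $2\le k\le n-1$. At $\delta=\gamma+e_n$, however, one gets $f_\gamma(\gamma_{n-1}-\lambda_n)+f_{\gamma+e_n-e_1}=0$, which contains a lex-smaller coefficient you do not control. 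So one extremal monomial does not pin down $\Lambda$. This is exactly the cyclic obstruction: for $n=2$, $f=x_1-x_2$ satisfies all these equations, with $f_{\gamma+e_2-e_1}=-1$.

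``Applying this to several extremal monomials'' is where the proof has to happen, yet you give no mechanism and never say where $n\ge 3$ enters. The fallback is not an induction either. Setting $x_n=0$ produces a chain derivation with $\bar d(x_{n-1})=0$, which has non-monomial Darboux polynomials such as $x_{n-1}-1$, and the ``lift using homogeneity'' is unspecified.

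The gap can be closed along your first line, using a second lex order:
\begin{itemize}
\item Let $\gamma'$ be the lex-largest exponent for the shifted order $x_n>x_1>\cdots>x_{n-1}$. By cyclic symmetry, $\lambda_i=\gamma'_{i-1}$ for all $i\ne n-1$.
\item For $n\ge 3$ the two lists share $\lambda_1,\dots,\lambda_{n-2}$, so $\gamma_j=\gamma'_j$ for $j\in\{n,1,\dots,n-3\}$.
\item The two maximality conditions then force $\gamma_{n-2}=\gamma'_{n-2}$, and homogeneity gives $\gamma=\gamma'$. Hence $\lambda_n=\gamma_{n-1}$, and $\Lambda=\sum_i\gamma_{i-1}x_i$ is the cofactor of $x^\gamma$.
\item Then $f-f_\gamma x^\gamma$ is Darboux with the same cofactor. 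Since $\beta\mapsto\sum_i\beta_{i-1}x_i$ is injective, the same argument applied to it gives a contradiction unless $f=f_\gamma x^\gamma$.
\end{itemize}

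Your specialization route can also be completed, for $f$ with no variable factor. For each $j$, the homogeneous Darboux polynomials of the chain obtained from $x_j=0$ are monomials by the one-order lex argument. Chain cofactors contain no $x_{j+1}$, so $\lambda_{j+1}=0$. Doing this for every $j$ gives $\Lambda=0$, and then $f|_{x_j=0}=c_jx_{j-1}^m$ with $c_j\neq 0$ for every $j$. This is impossible for $n\ge 3$: every $x_i^m$ then occurs in $f$, and $x_{j+1}^m\neq x_{j-1}^m$ also survives $x_j=0$.
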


\begin{cor}\label{ex1}
    Let $n\geq 3$ and $k[X]:=k[x_1,\dots,x_n]$. Let $d$ denote the $k$-derivation of $k[X]$  given by $d(x_n)=x_nx_1$ and $d(x_i)=x_ix_{i+1}$ for all $1\leq i\leq n-1$. Then Aut$(k[X])_d\cong \mathbb{Z}/n\mathbb{Z}$.
\end{cor}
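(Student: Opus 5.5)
The plan is to apply \Cref{final-iso-thm} directly, so I need to check its hypotheses and then compute the two ingredients: the Smith normal form of $A$ and the centralizer $K$.

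\textbf{Hypotheses.} Here $d(x_i)=x_ix_{i+1}$ for $i<n$ and $d(x_n)=x_nx_1$. Thus $\alpha_i=1$ and $\beta_{ii}=1$ for all $i$. Moreover $\beta_{i,i+1}=1$ (indices mod $n$), and all other $\beta_{ij}$ vanish. Hence $A=[\beta_{ij}]-I=P$, the cyclic permutation matrix with $a_{i,i+1}=1$ (indices mod $n$). Then $\det(A)=\pm1\neq 0$. By \Cref{prop1} we have $k(X)^d=k$. So \Cref{iso-thm}, \Cref{sem-dir-product} and \Cref{final-iso-thm} all apply.

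\textbf{Step 1: the torus part.} Since $A$ is a permutation matrix, it is unimodular. As noted in the preliminaries, its Smith normal form is therefore $I$, so $d_1=\cdots=d_n=1$ and $\mu_{d_1}\times\cdots\times\mu_{d_n}$ is trivial. One can also see this directly: $c^A=1$ means $c_{i+1}=1$ for all $i$, which gives $H_A=\{1\}$.

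\textbf{Step 2: the centralizer.} Write $A=P_{\gamma}$ for the appropriate $n$-cycle $\gamma$. Entry-wise, $a_{i\,\gamma^{-1}(i)}=1$ with $\gamma^{-1}(i)=i+1$; the exact orientation does not matter. Because $P_\tau P_\sigma=P_{\tau\sigma}$ and $\sigma\mapsto P_\sigma$ is injective, we get $P_\sigma A=AP_\sigma$ if and only if $\sigma\gamma=\gamma\sigma$. So $K$ is the centralizer of an $n$-cycle in $S_n$.

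The standard fact is that this centralizer is $\langle\gamma\rangle\cong\mathbb{Z}/n\mathbb{Z}$. To prove it, suppose $\sigma$ commutes with $\gamma$ and set $j=\sigma(1)$. Then $\sigma(\gamma^m(1))=\gamma^m(j)$ for every $m$. Since $\gamma$ is transitive, $\sigma$ is determined by $j$, and $\sigma$ equals the power of $\gamma$ sending $1$ to $j$.

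\textbf{Conclusion.} By \Cref{final-iso-thm}, $\text{Aut}(k[X])_d\cong\{1\}\rtimes K\cong\mathbb{Z}/n\mathbb{Z}$. Explicitly, it is generated by the cyclic shift $x_i\mapsto x_{i+1}$ (indices mod $n$).

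\textbf{Main obstacle.} There is no real difficulty. The only point needing care is the centralizer computation, where one must confirm that $A$ really is the permutation matrix of a single $n$-cycle and that commuting with $A$ is equivalent to commuting with that cycle in $S_n$.
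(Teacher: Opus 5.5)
Your proposal is correct and follows essentially the same route as the paper. You check the hypotheses via \Cref{prop1}, note that $A$ is an $n$-cycle permutation matrix (hence unimodular with trivial Smith normal form), identify $K$ with the centralizer of that $n$-cycle (cyclic of order $n$), and conclude by \Cref{final-iso-thm}. Your direct check that $H_A=\{1\}$ via $c_{i+1}=1$, and your short proof of the centralizer fact, simply fill in details that the paper cites as standard.
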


\begin{proof}
    Note that $\beta_{ii}=1$ for all $i=1,\ldots,n$, $\beta_{i(i+1)}=1$ for all $i=1,\ldots ,n-1$, $\beta_{n1}=1$, and $\beta_{ij}=0$ otherwise. Then the matrix $A=[\beta_{ij}-\delta_{ij}]$ becomes
   $$ A = \begin{pmatrix}
    0 & 1 & 0 & \cdots & 0\\    
    0 & 0 & 1 & \cdots & 0\\
    \vdots & \ddots & \ddots & \ddots & \vdots\\
    0 & \cdots & 0 & 0 & 1\\
    1 & 0 & \cdots & 0 & 0
    \end{pmatrix}.$$
    Now, using Proposition~\ref{prop1}, the derivation $d$ satisfies the hypothesis of Theorem~\ref{iso-thm}. Let $D=\text{diag}\{d_1,\dots,d_n\}$ be the Smith normal form of $A$ where $d_i\in \mathbb{N}$. From \cref{final-iso-thm}, we get that $\text{Aut}(k[X])_d\cong  (\mu_{d_1}\times\cdots\times\mu_{d_n}) \rtimes K$, where $K\,\cong\,\{\sigma\in S_n \,|\, P_\sigma A = A P_\sigma\}$. 

    Next, we determine the group $K\,\cong\,\{\sigma\in S_n \,|\, P_\sigma A = A P_\sigma\}$, where $P_\sigma$ denotes the permutation matrix associated with $\sigma$. Note that $A=P_{\tau^{-1}}$ where $\tau=(1\,2\,\dots\,n)\in S_n$. Then $P_\sigma P_{\tau^{-1}} = P_{\tau^{-1}} P_\sigma$, that is, $P_{\sigma\tau^{-1}}=P_{\tau^{-1}\sigma}$ implies that $\sigma\tau^{-1} = \tau^{-1}\sigma$, that is, $\sigma\tau = \tau\sigma$. Then, using the fact that the centralizer of an $n$-cycle in $S_n$ is the cyclic subgroup generated by that cycle, we have that $C_{S_n}(\tau)=\langle\tau\rangle$. Hence $K \,\cong\, C_{S_n}(\tau)\,=\,\langle (1\,2\,\dots\,n)\rangle \,\cong\,\mathbb{Z}/n\mathbb{Z}$.

    Note that the matrix $A$ is unimodular, i.e., its determinant is either $1$ or $-1$. Recall that if $A$ is an unimodular matrix, then the Smith normal form of $A$ is the identity matrix $I$. Thence, $\mu_{d_i}=\{1\}$ for all $i=1,\ldots ,n$.
    
    Then, it follows from \Cref{final-iso-thm} that Aut$(k[X])_d\cong (\mu_{d_1}\times\cdots\times\mu_{d_n}) \rtimes \mathbb{Z}/n\mathbb{Z}$. Since $\mu_{d_i}=\{1\}$ for all $i=1,\ldots ,n$ and $K\cong \mathbb{Z}/n\mathbb{Z}$, we get that Aut$(k[X])_d\cong \mathbb{Z}/n\mathbb{Z}$.
\end{proof}

\begin{rem}\label{LV-remark}
    From Corollary~\ref{ex1} we obtain a partial result of \cite{RK}. In \cite[Theorem 5.2(1)]{RK}, for $n\geq 5$, the authors showed that the Lotka–Volterra type derivation $\tilde{d}(x_i)=x_ix_{i-1}$, for all $i=2,\ldots ,n$, and $\tilde{d}(x_1)=x_1x_n$, has isotropy group isomorphic to $\mathbb{Z}/n\mathbb{Z}$. Note that the derivation $d$, as in Corollary~\ref{ex1},  is conjugate to $\tilde{d}$ via the automorphism $\varphi(x_i)=x_{n-i}$, for all $i=1,\ldots ,n-1$, and $\varphi(x_n)=x_n$. Since isotropy groups are isomorphic under conjugation of derivations, the isotropy group of $d$ is isomorphic with that of $\tilde{d}$. Thus Aut$(k[X])_{\tilde{d}} \cong \mathbb{Z}/n\mathbb{Z}$.
\end{rem}

\begin{cor}\label{thm:ex-1}
Let $d:k[x,y,z]\longrightarrow k[x,y,z]$ be the $k$–derivation defined by $d(x)=x^{c}$, $d(y)=x^{a}yz^{m}$, and  $d(z)=y^{n}z$, where $a,c,m,n\in\mathbb{N}$ with $c\geq 2$. Then $$\text{Aut}(k[x,y,z])_d\cong \mu_{g_1}\times\mu_{\frac{g_2}{g_1}}\times\mu_{\frac{(c-1)mn}{g_2}},$$ where $g_1=\gcd(c-1,a,m,n), g_2=\gcd\big((c-1)m, (c-1)n,an,mn\big)$ and $g=\gcd(c-1,m).$
\end{cor}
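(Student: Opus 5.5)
The plan is to apply \Cref{final-iso-thm} to this derivation. The work splits into three steps: checking the hypotheses of \Cref{iso-thm}, showing that the centralizer $K$ is trivial, and computing the Smith normal form of $A$ using \Cref{compute d_i}.

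\textbf{Step 1 (hypotheses).} Here $\alpha_i=1$, and $\beta_{11}=c\geq 2$, $\beta_{22}=1$, $\beta_{33}=1$. The matrix is
$$A=\begin{pmatrix} c-1&0&0\\ a&0&m\\ 0&n&0\end{pmatrix},\qquad \det(A)=-(c-1)mn\neq 0 .$$
It remains to show $k(X)^d=k$. By \Cref{lem1} it suffices to show that every irreducible Darboux polynomial of $d$ is $x$, $y$ or $z$ up to a scalar.

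This is the step I expect to be the main obstacle. First I would pick rational weights $w=(w_x,w_y,w_z)$ making $d$ weighted homogeneous. This means solving $Aw=(e,e,e)^T$, which is possible since $\det(A)\neq 0$. Then the homogeneous components of a Darboux polynomial are again Darboux, so one may assume $f$ is weighted homogeneous and not divisible by $x,y,z$. Next I would compare with the reductions $f|_{x=0}$, $f|_{y=0}$, $f|_{z=0}$. These are Darboux for the induced derivations: for instance, on $y=0$ we get $d(x)=x^c$, $d(z)=z$. I would use the fact that each such two-variable monomial derivation has only coordinate Darboux polynomials, and then analyze the cofactor $\Lambda$, which must be a weighted homogeneous polynomial. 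If $k(X)^d=k$ was already established in the cited work \cite{OJN} for this family, I would simply invoke it.

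\textbf{Step 2 ($K$ is trivial).} The condition $P_\sigma A=AP_\sigma$ is equivalent to $a_{\sigma(i)\sigma(j)}=a_{ij}$ for all $i,j$. Among the diagonal entries only $a_{11}=c-1$ is nonzero, so $\sigma(1)=1$. The only other candidate is $\sigma=(2\,3)$, which would force $a_{21}=a_{31}$, that is $a=0$. This contradicts $a\in\mathbb N$, so $K=\{\mathrm{id}\}$. Hence the semidirect product in \Cref{final-iso-thm} is just $\mu_{d_1}\times\mu_{d_2}\times\mu_{d_3}$.

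\textbf{Step 3 (Smith normal form).} By \Cref{compute d_i}:
\begin{itemize}
\item $d_1=\gcd(c-1,a,m,n)=g_1$;
\item $d_1d_2$ is the gcd of the $2\times 2$ minors, whose nonzero values up to sign are $(c-1)m$, $(c-1)n$, $an$, $mn$, so $d_1d_2=g_2$;
\item $d_1d_2d_3=|\det A|=(c-1)mn$.
\end{itemize}
Thus $d_2=g_2/g_1$ and $d_3=(c-1)mn/g_2$, which gives the stated isomorphism.
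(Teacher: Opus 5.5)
Your Steps 2 and 3 are correct and are exactly what the paper does. Only $a_{11}=c-1$ is a nonzero diagonal entry, so $\sigma(1)=1$, and the transposition $(2\,3)$ would force $a=0$. The $d_i$ then follow from \Cref{compute d_i}, using the gcd of the $1\times 1$ minors, the gcd of the $2\times 2$ minors $(c-1)m,(c-1)n,an,-mn$, and $|\det A|=(c-1)mn$.

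The gap is Step 1. You never establish the hypothesis $k(X)^d=k$ of \Cref{iso-thm}, and the route you sketch fails. Because $a,m,n\geq 1$, the derivations induced on the coordinate planes are:
\begin{itemize}
\item $x^c\partial_x$ on $\{y=0\}$ (you wrote $d(z)=z$ there, but $y^nz$ restricts to $0$);
\item $y^nz\,\partial_z$ on $\{x=0\}$;
\item again $x^c\partial_x$ on $\{z=0\}$.
\end{itemize}
Each of these annihilates a variable, so each has Darboux polynomials that are not coordinates, e.g.\ $z-1$ for $x^c\partial_x$ on $k[x,z]$. The fact you planned to use is therefore false, and your sketch gives no other way to exclude non-coordinate Darboux polynomials of $d$.

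Your fallback does not work as stated either. \cite[Theorem 3.3]{OJN}, i.e.\ \Cref{lem1}, only says that conditions (1) and (2) are equivalent when $w_d\neq 0$; it verifies neither for this derivation. The paper fills this step by invoking \cite[Proposition 11.3]{NZ}, which gives $k(X)^d=k$ for this family. With that citation, or a genuine proof that the only irreducible Darboux polynomials are $x,y,z$ up to scalars, the rest of your argument goes through as written.
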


\begin{proof}
    Let $\beta_{ij}$ be as in \Cref{iso-thm}. Let $B:= [\beta_{ij}]$ and $A:=[\beta_{ij}-\delta_{ij}]$ be two elements of M$_n(\mathbb{Z})$. Now, 
   $$B =\begin{pmatrix}
    c & 0 & 0\\
    a & 1 & m\\
    0 & n & 1
    \end{pmatrix} \,\,\ \text{and} \,\,\
    A =\begin{pmatrix}
    c-1 & 0 & 0\\
    a & 0 & m\\
    0 & n & 0
    \end{pmatrix},$$ and $\omega_d=$ det$(A)=-(c-1)mn\neq 0$. Also, by \cite[Proposition 11.3]{NZ}, $k(X)^d=k.$ Thus $d$ satisfies all three hypotheses of \Cref{iso-thm}. Then it follows from \Cref{final-iso-thm} that Aut$(k[X])_d\cong  (\mu_{d_1}\times \mu_{d_2} \times\mu_{d_3}) \rtimes K$, where $K\ \cong\ \{\sigma\in S_3 \ |\  P_\sigma A = A P_\sigma\}$ and $d_j\in \mathbb{N}$ for all $j$ such that diag$\{d_1,d_2,d_3\}$ is the Smith normal form of $A$.

    Next, we find the structure of $K$. Since $a\neq0$, it is easy to check that the only permutation matrix commuting with $A$ is the identity matrix. Thus, $K$ is the trivial group.

    Now, we compute the values of $d_i$'s. From \Cref{compute d_i}, we have $d_1=\gcd(c-1,a,m,n)$, $d_1d_2=\gcd\big((c-1)m,(c-1)n,an,mn\big)$ and $d_1d_2d_3=\det(A)=-(c-1)mn$. Let $g_1:=\gcd(c-1,a,m,n)$ and $g_2:=\gcd\big((c-1)m,(c-1)n,an,mn\big)$. Then $$d_1=g_1, \,\ d_2=\frac{g_2}{g_1}, \,\ \text{and} \,\ d_3=\frac{(c-1)mn}{g_2}.$$
    Thus, $\text{Aut}(k[x,y,z])_d\cong \mu_{g_1}\times\mu_{\frac{g_2}{g_1}}\times\mu_{\frac{(c-1)mn}{g_2}}.$
\end{proof}

\begin{cor}\label{thm:ex-2}
Let $k$ be a field of characteristic zero, and let $d:k[x,y,z]\longrightarrow k[x,y,z]$ be the $k$–derivation defined by $d(x)=x^{a}y^{b}$, $d(y)=y^{a}z^{b}$, and $d(z)=z^{a}x^{b}$, where $a,b\in\mathbb{N}$ satisfying $2a\neq b+2$. Then, the isotropy group of $d$ is $$\text{Aut}(k(x,y,z))_d\cong\left(\mu_{g}^2\times\mu_{\frac{(a-1)^3+b^3}{g^2}}\right)\rtimes\mathbb{Z}/3\mathbb{Z},$$ where $g=\text{gcd}(a-1,b)$.
\end{cor}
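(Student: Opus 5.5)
The plan is to apply \Cref{final-iso-thm} directly. Here $\beta_{11}=\beta_{22}=\beta_{33}=a\geq 1$, and all $\alpha_i=1$. The matrices are
$$B=\begin{pmatrix} a & b & 0\\ 0 & a & b\\ b & 0 & a\end{pmatrix},\qquad A=B-I=\begin{pmatrix} a-1 & b & 0\\ 0 & a-1 & b\\ b & 0 & a-1\end{pmatrix}.$$
This $A$ is a circulant matrix, so $\det A=(a-1)^3+b^3$, which is nonzero because $b\geq 1$ and $a-1\geq 0$.

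The hypothesis $k(X)^d=k$ has to come from the literature. I would invoke the known result of Nowicki and coauthors on the cyclic monomial derivations $x^ay^b\partial_x+y^az^b\partial_y+z^ax^b\partial_z$. Its trivial-constants criterion is exactly $2a\neq b+2$, hence the assumption in the statement. I expect this to be the main obstacle: it cannot be derived from the earlier material in the paper, so I would cite it (as \cite{NZ} or \cite{OJN}). Alternatively, one could reduce via \Cref{lem1} to the classification of Darboux polynomials for this derivation. With that cited, $d$ satisfies all hypotheses of \Cref{iso-thm}, and \Cref{final-iso-thm} gives $\text{Aut}(k[X])_d\cong(\mu_{d_1}\times\mu_{d_2}\times\mu_{d_3})\rtimes K$.

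\textbf{Computing $K$.} Write $A=(a-1)I+bP$, where $P$ is the permutation matrix of a $3$-cycle $\tau$. Since $b\neq 0$, a permutation matrix $P_\sigma$ commutes with $A$ if and only if it commutes with $P$, that is, $\sigma\tau=\tau\sigma$. The centralizer of a $3$-cycle in $S_3$ is $\langle\tau\rangle\cong\mathbb{Z}/3\mathbb{Z}$, so $K\cong\mathbb{Z}/3\mathbb{Z}$.

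\textbf{Smith normal form.} I would use \Cref{compute d_i}.
\begin{itemize}
\item The $1\times1$ minors are $a-1$, $b$ and $0$, so $d_1=g=\gcd(a-1,b)$.
\item The $2\times 2$ minors include $(a-1)^2$, $b^2$, $(a-1)b$ and $0$. A direct listing shows that every $2\times2$ minor is one of $0$, $\pm(a-1)^2$, $\pm b^2$, $\pm(a-1)b$. Hence $d_1d_2=\gcd\big((a-1)^2,(a-1)b,b^2\big)=g^2$, and so $d_2=g$.
\item Finally, $d_3=\det A/g^2=\big((a-1)^3+b^3\big)/g^2$.
\end{itemize}

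Substituting $d_1,d_2,d_3$ and $K$ into \Cref{final-iso-thm} yields the claimed isomorphism, with $\mathbb{Z}/3\mathbb{Z}$ acting by cyclically permuting coordinates as in Remark~\ref{rem1}.
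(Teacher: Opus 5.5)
Your proposal is correct and follows the paper's proof essentially step for step. You verify the hypotheses of \Cref{iso-thm}, citing \cite{NZ} for $k(X)^d=k$ (the paper uses \cite[Proposition 9.6]{NZ}), identify $K$ as the centralizer of a $3$-cycle via $A=(a-1)I+bP$, and read off $d_1=d_2=g$, $d_3=\big((a-1)^3+b^3\big)/g^2$ from \Cref{compute d_i}. One small caution about your closing phrase: the cyclic coordinate permutation of Remark~\ref{rem1} acts on $H_A\subseteq (k^*)^3$ and is only transported abstractly to $\mu_g^2\times\mu_{((a-1)^3+b^3)/g^2}$, whose factors need not have the same order (e.g.\ $a=2$, $b=1$ gives $\mu_1^2\times\mu_2$), so it is not literally a permutation of those three factors.
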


\begin{proof}
    Let $\beta_{ij}$ be as in \Cref{iso-thm}. Let $B:= [\beta_{ij}]$ and $A:=[\beta_{ij}-\delta_{ij}]$ be two elements of M$_n(\mathbb{Z})$. Now, 
    $$B =\begin{pmatrix}
        a & b & 0\\
        0 & a & b\\
        b & 0 & a
    \end{pmatrix} \,\ \text{and} \,\
      A =\begin{pmatrix}
       a-1 & b   & 0\\
       0   & a-1 & b\\
       b   & 0   & a-1
    \end{pmatrix},$$ and det$(A)=(a-1)^3+b^3\neq 0$. Since $a,b\in \mathbb{N}$ and $2a\neq b+2$, by \cite[Proposition 9.6]{NZ}, $k(X)^d=k.$ Thus $d$ satisfies all three hypotheses of \Cref{iso-thm}. Then it follows from \Cref{final-iso-thm} that Aut$(k[X])_d\cong  (\mu_{d_1}\times \mu_{d_2} \times\mu_{d_3}) \rtimes K$, where $K\,\cong\,\{\sigma\in S_3 \, | \, P_\sigma A = A P_\sigma\}$ and $d_j\in \mathbb{N}$ for all $j$ such that diag$\{d_1,d_2,d_3\}$ is the Smith normal form of $A$.
    
    Next, we find the structure of $K$. Clearly, $(1)\in K$. Moreover, the matrix $P_{\sigma}$ commutes with $A$ if and only if $\sigma \in \{(1),(1 2 3),\, (1 3 2)\}$. Thus $K\cong\langle (1\,2\,3)\rangle\,\cong\,\mathbb{Z}/3\mathbb{Z}$.

    Now, from \Cref{compute d_i}, we get that $d_1=d_2=g$ and $d_3=\frac{(a-1)^3+b^3}{g^2}$ where $g=\gcd(a-1,b)$. Hence, by \Cref{final-iso-thm} we get that $$\text{Aut}(k(x,y,z))_d\cong\left(\mu_{g}^2\times\mu_{\frac{(a-1)^3+b^3}{g^2}}\right)\rtimes\mathbb{Z}/3\mathbb{Z}.$$
\end{proof}

\section{Isotropy Group of Jouanolou-Type Derivations}\label{sec 4}

Let $n\geq 2$. In this section, we consider certain kinds of monomial $k$-derivations $d$ on $k[X]:=k[x_1,\dots,x_n]$ with the property that $x_i$ does not divide $d(x_i)$ for at least one $i\in\{1,2,\dots, n\}$. One such class of examples are {\it Jouanolou} derivations given by $J(n,r):= x_1^r\partial_{x_2}+x_2^r\partial_{x_3}+\cdots +x_n^r \partial_{x_1}$ for $r\geq 2$. In 1979, Jouanolou proved in \cite{J79} that for every $r\geq2$, the derivation $J(3, r)$ has no Darboux polynomial. More generally, for $n\geq 3$ and $r\geq 2$, \.{Z}o{\l}\k{a}dek in \cite{ZH} provided an analytic proof that $J(n,r)$ is without Darboux polynomial. It is known by \cite[Theorem 4.2]{OJN} that the field of rational constants of $J(n,r)$ is trivial if and only if $J(n,r)$ is without a Darboux polynomial. In this section, we consider Jouanolou-type derivations of $k[X]$ and study their isotropy groups.

Let $r\geq 2$ be an integer and $\sigma \in S_n$ be a non-identity permutation. We define the {\it Jouanolou-type} derivation of $k[x_1,\dots,x_n]$ by $$J(n,r,\sigma):=\sum_{i=1}^n x_i^{\,r}\,\partial_{x_{\sigma(i)}}.$$ We would simply use $J$ instead of $J(n,r,\sigma)$ if $n,r,\sigma$ are clear from the context. Note that when $\sigma$ is the $n$ cycle $(12\cdots n)$, we obtain the Jouanolou derivation $J(n,r)$. For a general permutation $\sigma$, $J(n,r,\sigma)$ may or may not contain a Darboux polynomial. For example, consider $n\geq 2,$ $r\geq 2$ and $\sigma\in S_n$ is such that it contains a $2$-cycle, say $(n_1n_2)$ where $1\leq n_1<n_2\leq n$, in its disjoint cycle decomposition. Then $J(n,r,\sigma)(x_{n_1}^{r+1}-x_{n_2}^{r+1})=0$. Thus, we conclude that $x_{n_1}^{r+1}-x_{n_2}^{r+1}$ is a Darboux polynomial and the field of rational constants is not trivial.

\begin{notn}\label{last-not}
We adhere to the following notation throughout this section. Let $n,r\geq 2$ and $\sigma\in S_n$, then the disjoint cycle decomposition of $\sigma$ is denoted by $\sigma=C_1C_2\cdots C_t$, where each cycle of length $m_j$ is given by $C_j=(i_{j,1}\ i_{j,2}\ \dots\ i_{j,m_j})$ for all $1\leq j \leq t$. In the disjoint cycle decomposition, we also consider cycles of length one. The number of cycles of length $m$ appearing in the disjoint cycle decomposition of  $\sigma$ is denoted by $l_m$. For $C_j=(i_{j,1}\ i_{j,2}\ \dots\ i_{j,m_j})$, we define \emph{support} of $C_j$, denoted by Supp$(C_j)$, as the set $\{i_{j,1}\ ,  i_{j,2}\ , \dots\ ,i_{j,m_j}\}$ (Note that this definition agrees with the usual definition of support of a cycle of length greater than one.)  
 
For each $1\le s\le t$, let $X_{m_s}=\{\,x_i \mid i \text{ lies in a cycle of $\sigma$ of length } m_s\,\},$ and let $k[X_{m_s}]$ denote the polynomial subring generated by the variables $x_i$ belonging to $X_{m_s}$. Note that the restriction of the map $J:=\sum_{i=1}^n x_i^{\,r}\,\partial_{x_{\sigma(i)}}$ to the $k$-subalgebra $k[X_{m_s}]$ is also a derivation of the subalgebra, and is denoted by $J|_{k[X_{m_s}]}$. 
\end{notn}
In the following theorem, we give a necessary and sufficient condition for an automorphism of $k[X]$ to lie in the isotropy group of a Jouanolou-type derivation.

\begin{thm}\label{J main}
Let $n,r,\sigma$ be as in Notation \ref{last-not}. Consider the Jouanolou-type derivation $J:=J(n,r,\sigma)=\sum_{i=1}^n x_i^{\,r}\,\partial_{x_{\sigma(i)}}$. Let $\varphi\in Aut(k[X])_J$. Then, for $1\leq i\leq n$, $\varphi(x_i)=\lambda_i\,x_{\tau(i)}$ for some $\tau\in S_n$ and $\lambda_i\in k^*$ satisfying the following conditions
\begin{enumerate}
\item[\emph{(i)}] $\tau$ commutes with $\sigma$, i.e., $\tau\sigma=\sigma\tau$, and
\item[\emph{(ii)}] $\lambda_{\sigma(i)}=\lambda_i^{\,r}$ for $1\leq i\leq n$.
\end{enumerate}
Furthermore, for each cycle $C_j=(i_{j,1}\dots i_{j,m_j})$, we get $\lambda_{i_{j,s}}=\lambda_{i_{j,1}}^{\,r^{\,s-1}},$ where $s$ is an index taken modulo $m_j$.

Conversely, for any $\tau\in S_n$ with $\tau\sigma=\sigma\tau$, and $\lambda_i\in k^\ast$ satisfying $\lambda_{\sigma(i)}=\lambda_i^{\,r}$, the automorphism $\varphi$ of $k[X]$, given by $\varphi(x_i)=\lambda_ix_{\tau(i)}$ for $1\leq i\leq n$, lies in Aut$(k[X])_J$.
\end{thm}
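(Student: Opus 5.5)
The plan is to avoid Darboux polynomials altogether, since $J(n,r,\sigma)$ may have some. Instead I would use a degree argument based on the homogeneity of $J$, followed by a comparison of coefficients. First I fix the convention. Since $J=\sum_i x_i^r\partial_{x_{\sigma(i)}}$, we have $J(x_j)=x_{\sigma^{-1}(j)}^{\,r}$ for every $j$. Let $\varphi\in \mathrm{Aut}(k[X])_J$ and put $f_j:=\varphi(x_j)$. Applying $J\varphi=\varphi J$ to $x_j$ gives
\begin{equation*}
J(f_j)=f_{\sigma^{-1}(j)}^{\,r}\qquad (1\le j\le n).
\end{equation*}

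\textbf{Step 1 (linearity).} The derivation $J$ sends a homogeneous polynomial of degree $m$ to a homogeneous polynomial of degree $m+r-1$, or to $0$. Hence $\deg J(f)\le \deg f+r-1$ for every $f$. Let $D=\max_j\deg f_j$, which is at least $1$ because $\varphi$ is an automorphism, and choose $j$ with $\deg f_{\sigma^{-1}(j)}=D$. The displayed identity then gives $rD\le \deg f_j+r-1\le D+r-1$. So $(r-1)D\le r-1$, hence $D=1$ and every $f_j=L_j+a_j$ with $L_j$ a nonzero linear form and $a_j\in k$.

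\textbf{Step 2 (no constant terms).} Now $J(f_j)=J(L_j)$ is homogeneous of degree $r$. On the other hand, the degree $r-1$ component of $(L_{\sigma^{-1}(j)}+a_{\sigma^{-1}(j)})^r$ is $r\,a_{\sigma^{-1}(j)}L_{\sigma^{-1}(j)}^{\,r-1}$. Since $\mathrm{char}\,k=0$ and $L_{\sigma^{-1}(j)}\ne0$, this forces $a_{\sigma^{-1}(j)}=0$. As $j$ ranges over all indices, every $a_i=0$.

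\textbf{Step 3 (monomiality).} Write $L_j=\sum_l c_{jl}x_l$. Then $J(L_j)=\sum_i c_{j\sigma(i)}x_i^{\,r}$ is a sum of pure $r$-th powers. But if $L_{\sigma^{-1}(j)}$ had two nonzero coefficients, $L_{\sigma^{-1}(j)}^{\,r}$ with $r\ge 2$ would contain a mixed monomial $x_p^{r-1}x_q$ with coefficient $r\,c^{r-1}c'\neq0$ (char $0$). Hence each $L_i=\lambda_i x_{\tau(i)}$ for some map $\tau$ and some $\lambda_i\in k^*$. Invertibility of $\varphi$ forces $\tau\in S_n$.

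\textbf{Step 4 (conditions).} Substituting $\varphi(x_i)=\lambda_i x_{\tau(i)}$ gives
\begin{equation*}
J(\lambda_j x_{\tau(j)})=\lambda_j x_{\sigma^{-1}\tau(j)}^{\,r}
\quad\text{and}\quad
\varphi(x_{\sigma^{-1}(j)})^r=\lambda_{\sigma^{-1}(j)}^{\,r}x_{\tau\sigma^{-1}(j)}^{\,r}.
\end{equation*}
Comparing the variables yields $\sigma^{-1}\tau=\tau\sigma^{-1}$, i.e. condition (i). Comparing the coefficients yields $\lambda_j=\lambda_{\sigma^{-1}(j)}^{\,r}$, which is (ii). The formula $\lambda_{i_{j,s}}=\lambda_{i_{j,1}}^{\,r^{s-1}}$ follows by induction on $s$ along the cycle $C_j$, using $\sigma(i_{j,s})=i_{j,s+1}$.

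\textbf{Converse.} The map $\varphi^{-1}J\varphi$ is a $k$-derivation, so it equals $J$ as soon as the two agree on each $x_j$. That is, it suffices to check $J\varphi(x_j)=\varphi J(x_j)$, and this is exactly the computation of Step 4 read backwards.

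I expect Step 1 to be the main obstacle, since this is where one must rule out nonlinear automorphisms without any Darboux-polynomial information. The key is the degree inequality: the right-hand side $f_{\sigma^{-1}(j)}^{\,r}$ multiplies degrees by $r$, while $J$ only adds $r-1$. Steps 2 and 3 then rely only on characteristic zero.
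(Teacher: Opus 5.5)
Your proposal is correct and follows essentially the same route as the paper. Both arguments use the degree inequality $r\deg f_{\sigma^{-1}(j)}\le \deg f_j+r-1$ to force linearity, then compare $J(f_j)$ with $f_{\sigma^{-1}(j)}^{\,r}$ to eliminate constant terms and mixed monomials, and finally read off $\tau\sigma=\sigma\tau$ and $\lambda_{\sigma(i)}=\lambda_i^{\,r}$. The differences are cosmetic: you take a maximal-degree index where the paper sums the inequality over each cycle of $\sigma$, and you use the degree-$(r-1)$ component rather than the constant term $b^r$ to show that the constants vanish.
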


\begin{proof}
Let $\varphi\in$ Aut$(k[X])$ satisfying $J\varphi=\varphi J$. Then, for all $1\leq i \leq n$, we have that $\varphi(x_i)=f_i$ for some $f_i\in k[X]$. Let $d_i:=\deg(f_i)$ be the total degree of $f_i$. Note that $d_i\geq 1$ for all $1\leq i\leq n.$

We have $J(f_i)=J(\varphi(x_i))=\varphi(J(x_i))=\varphi(x_{\sigma^{-1}(i)}^{r})=f_{\sigma^{-1}(i)}^{r}$, for all $1\leq i \leq n$. Furthermore, from the relation $J(f_i)=f_{\sigma^{-1}(i)}^{\,r}$, we get that $\deg J(f_i)=r\,d_{\sigma^{-1}(i)}$. On the other hand, we have that $J(f_i)=\sum_{w=1}^n x_w^{\,r}\,\frac{\partial f_i}{\partial x_{\sigma(w)}}$. Since each $\partial f_i/\partial x_{\sigma(w)}$ has degree at most $d_i-1$, every summand has degree at most $d_i+r-1$. Thus 
\begin{equation}\label{E1}
    rd_{\sigma^{-1}(i)}\leq d_i+r-1
\end{equation} for all $i$. Now, consider a cycle $C_j=(i_{j,1}\dots i_{j,m_j})$ appearing in the disjoint cycle decomposition of $\sigma$. Summing the inequalities of Equation~\ref{E1}, over the indices of $C_j$, we get that $$r\sum_{s=1}^{m_j}d_{i_{j,s}}\leq\sum_{s=1}^{m_j} d_{i_{j,s}} + m_j(r-1).$$
Since $r>1$ and each $d_{i_{j,s}}\geq 1$, it follows that $d_{i_{j,1}}=\cdots=d_{i_{j,m_j}}=1$, that is, each $f_i$ is linear. Thus, for all $i$, we can write $f_i=\sum_{t=1}^n a_{it}x_t + b_i$, where $b_i\in k$, $a_{it}\in k$ for all $t$ such that $a_{it}\neq 0$ for at least one $t$.

Note that $J(f_i)=f_{\sigma^{-1}(i)}^{\,r}$. Rewrite $f_i$ as $f_i=\sum_{w=1}^{n}a_{i\sigma(w)}x_{\sigma(w)}+b_i$. Then for all $i,$ we have $$J(f_i)=\sum_{w=1}^n x_w^{\,r}\,\frac{\partial f_i}{\partial x_{\sigma(w)}}=\sum_{w=1}^n a_{i\sigma(w)}x_w^{r},\;\; \text{and}$$
$$f_{\sigma^{-1}(i)}^{r}=(\sum_{w=1}^{n}a_{\sigma^{-1}(i)\sigma(w)}x_{\sigma(w)}+b_{\sigma^{-1}(i)})^r.$$ Now, comparing the above expressions of $J(f_i)$ and $f_{\sigma^{-1}(i)}^{r}$, we have that for all $i$, 
\begin{equation}\label{eqn J(f_i)=}
\sum_{w=1}^n a_{i\sigma(w)}x_w^{r}=(\sum_{w=1}^{n}a_{\sigma^{-1}(i)\sigma(w)}x_{\sigma(w)}+b_{\sigma^{-1}(i)})^r.
\end{equation} 
Since there is no constant term in L.H.S. of Equation~\ref{eqn J(f_i)=}, it follows that $b_{\sigma^{-1}(i)}=0$ for all $i$, that is, $b_i=0$ for all $i.$ Thus $f_i=\sum_{w=1}^{n}a_{i\sigma(w)}x_{\sigma(w)}$, and Equation~\ref{eqn J(f_i)=} becomes 
\begin{equation}\label{eqn' J(f_i)=}
    \sum_{w=1}^n a_{i\sigma(w)}x_w^{r}=(\sum_{w=1}^{n}a_{\sigma^{-1}(i)\sigma(w)}x_{\sigma(w)})^r,
\end{equation} for all $i.$ Since L.H.S. of Equation~\ref{eqn' J(f_i)=} contains no monomial of mixed terms, it follows that each $f_i$ is a monomial of degree one. Since $\varphi(x_i)=f_i$ for all $1\leq i \leq n$, there exists a permutation $\tau\in S_n$ such that $f_i=\lambda_i x_{\tau(i)}$ for some $\lambda_i \in k^*$. Then $J(f_i)=\lambda_i x_{\sigma^{-1}(\tau(i))}^{\,r}$ and $f_{\sigma^{-1}(i)}^{r}=\lambda_{\sigma^{-1}(i)}^{\,r} x_{\tau(\sigma^{-1}(i))}^{r}$, for all $i$. Since $J(f_i)=f_{\sigma^{-1}(i)}^{r}$, it follows that $\sigma^{-1}(\tau(i))=\tau(\sigma^{-1}(i))$ and $\lambda_i=\lambda_{\sigma^{-1}(i)}^{\,r}$ for all $i$. Hence, conditions (i) and (ii) are satisfied. 

Furthermore, for any cycle $C_j=(i_{j,1}\dots i_{j,m_j})$ appearing in the disjoint cycle decomposition of $\sigma$, we get $\lambda_{i_{j,s}}=\lambda_{i_{j,s-1}}^{\,r}$ for all $2\le s\le m_j$ and $\lambda_{i_{j,1}}=\lambda_{i_{j,m_j}}^{\,r}.$ We show by induction on $s$ that $\lambda_{i_{j,s}}=\lambda_{i_{j,1}}^{\,r^{\,s-1}}$ for all $1\leq s \leq m_j.$ For $s=1$, it is clear. Now assume that $\lambda_{i_{j,{s-1}}}=\lambda_{i_{j,1}}^{\,r^{\,s-2}}$ for some $2\leq s\leq m_j$. Then $\lambda_{i_{j,s}}=\lambda_{i_{j,s-1}}^{r}=(\lambda_{i_{j,1}}^{r^{s-2}})^r=\lambda_{i_{j,1}}^{r^{s-1}}.$

Conversely, assume that $\tau\in S_n$ with $\tau\sigma=\sigma\tau$, and  $\lambda_1,\ldots ,\lambda_n\in k^*$ such that $\lambda_{\sigma(i)}=\lambda_i^{\,r}$. Let us consider the automorphism $\varphi$ of $k[X]$ given by $\varphi(x_i)=\lambda_i x_{\tau(i)}$ for all $i$. Then $$(J\varphi)(x_i)=\lambda_i x_{\sigma^{-1}(\tau(i))}^{\,r}=\lambda_{\sigma^{-1}(i)}^{\,r}\,x_{\tau(\sigma^{-1}(i))}^{\,r}=(\varphi J)(x_i).$$ Thus $\varphi \in$ Aut$(k[X])_J$. 
\end{proof}

\begin{prp}\label{direct sum Aut}
Let $n,r,\sigma$ be as in Notation \ref{last-not}. Consider the Jouanolou-type derivation $J:=J(n,r,\sigma)=\sum_{i=1}^n x_i^{\,r}\,\partial_{x_{\sigma(i)}}$. Let $\varphi\in$ Aut$(k[X])_J$. Let $m_1,\dots,m_q$ be the distinct cycle lengths appearing in the disjoint cycle decomposition of $\sigma$. Then $\varphi|_{k[X_{m_s}]}\in$ Aut$(k[X_{m_s}])_{J|_{k[X_{m_s}]}}$ for all $s=1,\ldots ,q$. Furthermore, the map $$\Psi:\text{Aut}(k[X])_J \longrightarrow \prod_{s=1}^{q} \text{Aut}(k[X_{m_s}])_{J|_{k[X_{m_s}]}}$$ defined by $\Psi(\varphi)=(\varphi|_{k[X_{m_1}]},\dots,\varphi|_{k[X_{m_q}]})$ is a group isomorphism.
\end{prp}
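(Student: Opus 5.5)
By \Cref{J main}, every $\varphi\in\text{Aut}(k[X])_J$ has the form $\varphi(x_i)=\lambda_i x_{\tau(i)}$ with $\tau\sigma=\sigma\tau$ and $\lambda_{\sigma(i)}=\lambda_i^{\,r}$. I will first show that such a $\tau$ preserves each block of variables $X_{m_s}$. Since $\tau$ commutes with $\sigma$, it maps each cycle of $\sigma$ onto a cycle of the same length. Indeed, for $C_j=(i_{j,1}\,\dots\,i_{j,m_j})$ we have $\sigma(\tau(i_{j,u}))=\tau(\sigma(i_{j,u}))=\tau(i_{j,u+1})$, so $(\tau(i_{j,1})\,\dots\,\tau(i_{j,m_j}))$ is a cycle of $\sigma$ of length $m_j$. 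Hence $\tau$ permutes the index set $\{i \mid i \text{ lies in a cycle of length } m_s\}$, and therefore $\varphi(k[X_{m_s}])\subseteq k[X_{m_s}]$. The same argument applied to $\varphi^{-1}$, which also commutes with $J$, gives the reverse inclusion. So the restriction $\varphi|_{k[X_{m_s}]}$ is a $k$-automorphism of $k[X_{m_s}]$.

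Next I check that this restriction commutes with $J|_{k[X_{m_s}]}$. The subring $k[X_{m_s}]$ is $J$-stable, because $J(x_i)=x_{\sigma^{-1}(i)}^r$ and $\sigma^{-1}(i)$ lies in a cycle of the same length as $i$. The relation $J\varphi=\varphi J$ then restricts directly to the subring. The map $\Psi$ is a homomorphism because restriction respects composition.

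For injectivity, note that $k[X]$ is generated by $\bigcup_s X_{m_s}$, so an automorphism is determined by its restrictions. Surjectivity is the main step. Given $\varphi_s\in\text{Aut}(k[X_{m_s}])_{J|_{k[X_{m_s}]}}$ for each $s$, define $\varphi$ on $k[X]$ by $\varphi(x_i)=\varphi_s(x_i)$ for $x_i\in X_{m_s}$. This is an automorphism: its inverse is obtained from the $\varphi_s^{-1}$ in the same way, since $k[X]\cong\bigotimes_s k[X_{m_s}]$. Moreover $J\varphi$ and $\varphi J$ agree on every generator $x_i$, because $J(x_i)=x_{\sigma^{-1}(i)}^r$ lies in the block of $x_i$, where $\varphi$ acts as $\varphi_s$ and commutes with $J|_{k[X_{m_s}]}$. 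Two derivations composed with an automorphism agreeing on generators need care: $J\varphi$ and $\varphi J$ are not derivations. However, $\varphi J\varphi^{-1}$ is a derivation, and it agrees with $J$ on the generators $\varphi(x_i)$. Here I use that $\varphi$ maps each block onto itself, so the images $\varphi(x_i)$ generate $k[X]$. Hence $\varphi J\varphi^{-1}=J$, that is, $\varphi\in\text{Aut}(k[X])_J$, and $\Psi(\varphi)=(\varphi_1,\dots,\varphi_q)$.

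The step I expect to need the most care is the block-stability of $\tau$: showing that commuting with $\sigma$ forces $\tau$ to preserve cycle lengths. The computation above settles it.
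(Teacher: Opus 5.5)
Your proposal is correct and follows essentially the same route as the paper: you use \Cref{J main} to get block-stability of $\tau$ from $\tau\sigma=\sigma\tau$, restrict to each $J$-stable subring $k[X_{m_s}]$, and glue the $\varphi_s$ to obtain surjectivity. The differences are minor. Your direct argument that $\tau$ carries each $\sigma$-cycle onto a $\sigma$-cycle of the same length replaces the paper's divisibility argument with $\sigma^{m_s}$ and $\tau^{-1}$. Your conjugation step $\varphi J\varphi^{-1}=J$ explicitly justifies something the paper only asserts, namely that agreement of $J\varphi$ and $\varphi J$ on the $x_i$ forces $J\varphi=\varphi J$.
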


\begin{proof}
    For any $\varphi \in$ Aut$(k[X])_J$, let us denote $\varphi_{m_s}:=\varphi|_{k[X_{m_s}]}$ for all $s=1,\ldots ,q$. For any $\varphi \in$ Aut$(k[X])_J$, from \Cref{J main}, we have
    \begin{equation}\label{E2}
        \varphi(x_i)=\lambda_i x_{\tau(i)},
    \end{equation}
    where $\tau\in S_n$ and $\lambda_i\in k^\ast$ satisfying $\tau\sigma=\sigma\tau$ and $\lambda_{\sigma(i)}=\lambda_i^{\,r}$, for all $i$. Recall that $J(k[X_{m_s}])\subseteq k[X_{m_s}]$ for all $s=1,\ldots,q$.

    {\bf (i)} We first show that $\varphi_{m_s}$ maps to $k[X_{m_s}]$ for all $s=1,\ldots ,q$. Let $i\in\{1,\ldots, n\}$. Suppose $i$ belongs to the support of a cycle of length $m_s$ appearing in the disjoint cycle decomposition of $\sigma$. Then $\sigma^{m_s}(i)=i$. Now, applying $\tau$ and using $\tau\sigma=\sigma\tau$, we have $\sigma^{m_s}(\tau(i))=\tau(\sigma^{m_s}(i))=\tau(i)$. Thus $\tau(i)$ belongs to the support of a cycle of length $m'_s$ appearing in the disjoint cycle decomposition of $\sigma$ where $m'_s$ divides $m_s$. 
    We have $\sigma^{m_{s'}}(\tau(i))=\tau(i)$. Similarly, using $\tau^{-1}\sigma =\sigma \tau^{-1}$, we get that $m_{s}$ divides $m_{s'}$.  Hence, $m_s=m'_s$.
    Now from Equation~\ref{E2}, we get that $\varphi(k[X_{m_s}])\subseteq k[X_{m_s}]$.

    {\bf (ii)} Since $\varphi,\varphi^{-1}\in$ Aut$(k[X])_J$, from (i), we have that $\varphi(k[X_{m_s}])\subseteq k[X_{m_s}]$ and $\varphi^{-1}(k[X_{m_s}])\subseteq k[X_{m_s}]$. Hence it follows that $\varphi_{m_s}\in$ Aut$(k[X_{m_s}])$. Now, we show that $\varphi_{m_s}$ commutes with $J|_{k[X_{m_s}]}$. Let $h\in k[X_{m_s}]$. Since $\varphi_{m_s}\in$ Aut$(k[X_{m_s}])$ and $J(k[X_{m_s}])\subseteq k[X_{m_s}]$, we have that $J|_{k[X_{m_s}]}(\varphi_{m_s}(h))=J(\varphi(h))=\varphi(J(h))=\varphi_{m_s}(J|_{k[X_{m_s}]}(h))$. Hence $J|_{k[X_{m_s}]}\varphi_{m_s}=\varphi_{m_s}J|_{k[X_{m_s}]}$. Thus, $\varphi_{m_s}\in$ Aut$(k[X_{m_s}])_{J|_{k[X_{m_s}]}}$.
   
    {\bf (iii)} Next, we show that $\Psi$ is a bijective group homomorphism. Let $\varphi,\psi\in$ Aut$(k[X])_J$. Then, from (ii), we have that $\varphi_{m_s}, \psi_{m_s} \in$ Aut$(k[X_{m_s}])_{J|_{k[X_{m_s}]}}$ for all $s=1,\ldots ,q$. Hence $(\varphi\psi)_{m_s}=\varphi_{m_s}\psi_{m_s}$ for all $1\leq s \leq q$. Thus $\Psi(\varphi\psi)=\Psi(\varphi)\Psi(\psi)$. Hence, $\Psi$ is a group homomorphism. 

Note that $\varphi$ is the identity map on $k[X]$ if and only if $\varphi_{m_s}$ is the identity map on $k[X_{m_s}]$ for all $1\leq s\leq q$. Hence, the map $\Psi$ is injective. 
    
    Now, we show that $\Psi$ is surjective. Let $(\varphi_{m_1},\dots,\varphi_{m_q})$ be an element of $\prod_{s=1}^{q}$Aut$(k[X_{m_s}])_{J|_{k[X_{m_s}]}}$. Define $\varphi:k[x_1,\dots,x_n]\to k[x_1,\dots,x_n]$ by setting $\varphi(x_i)=\varphi_{m_s}(x_i)$ for $x_i\in X_{m_s}$. Since the sets $X_{m_s}$ are pairwise disjoint and their union is $\{x_1,\ldots,x_n\}$, the map $\varphi$  defines a unique $k$-algebra automorphism. Note that $\varphi_{m_s}\in$ Aut$(k[X_{m_s}])_{J|_{k[X_{m_s}]}}$ and $J(k[X_{m_s}])\subseteq k[X_{m_s}]$ for all $1\leq s\leq q$. Let $x_i\in X_{m_s}$ for some $1\leq s\leq q$. We have $J\varphi(x_i)=J|_{k[X_{m_s}]}\varphi_{m_s}(x_i)=\varphi_{m_s}J|_{k[X_{m_s}]}(x_i)=\varphi J(x_i)$. Hence $J\varphi=\varphi J$. Thus, $\varphi\in$ Aut$(k[X])_J$. Therefore, we have that $\Psi(\varphi)=(\varphi_{m_1},\dots,\varphi_{m_q}),$ proving the surjectivity of $\Psi$.
\end{proof}

Let $\sigma \in$ S$_n$, and the disjoint cycle decomposition of $\sigma$ is given by $\sigma=C_1C_2\cdots C_t$, where $C_j=(i_{j,1}\ i_{j,2}\ \dots\ i_{j,m_j})$. For each $m\geq1$, recall that $l_m$ denotes the number of cycles of length $m$ in the decomposition of $\sigma$. For a fixed $m$, let us define $$H_m:=\mu_{r^m-1}^{l_m}\,\ \ \ \ \text{and} \,\ \ \ \ G_m:=(\mathbb{Z}/m\mathbb{Z})^{l_m}\rtimes S_{l_m},$$ where the action of $S_{l_m}$ on $(\mathbb{Z}/m\mathbb{Z})^{l_m}$ is given by $\rho\cdot (u_1,\ldots,u_{l_m})=(u_{\rho^{-1}(1)},\ldots,u_{\rho^{-1}(l_m)}),$ where $\rho\in S_{l_m}$ and $u=(u_1,\ldots,u_{l_m})\in (\mathbb{Z}/m\mathbb{Z})^{l_m}.$ We note that $G_m$ is nothing but the \emph{generalized symmetric group} $G(m,1,l_m)$. 

Let $(u,\rho)=((u_1,\ldots ,u_{l_m}),\rho)\in G_m$ and $\zeta=(\zeta_1,\dots,\zeta_{l_m})\in H_m$. Consider the map $$\Phi_m:G_m\to \text{Aut}(H_m)$$ given by $$(\Phi_m((u,\rho))(\zeta))_j=\zeta_{\rho^{-1}(j)}^{\,r^{u_j}} \,\,\, \text{for all }\,\,\, j=1,\ldots ,l_m.$$


\begin{lmm}
    The map $\Phi_m((u,\rho))$ is an automorphism of $H_m$ for every $(u,\rho)\in G_m$.
\end{lmm}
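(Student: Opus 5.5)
The plan is to verify three things: that $\Phi_m((u,\rho))$ is well defined as a map $H_m\to H_m$, that it is a group homomorphism, and that it is bijective. Note first that $u_j\in\mathbb{Z}/m\mathbb{Z}$ is only a residue class, so the expression $\zeta^{\,r^{u_j}}$ needs justification. For $\zeta\in\mu_{r^m-1}$ we have $\zeta^{r^m}=\zeta\cdot\zeta^{r^m-1}=\zeta$. Hence if $u\equiv u' \pmod m$ with $u,u'\ge 0$, say $u'=u+km$, then
\[
\zeta^{r^{u'}}=\bigl(\zeta^{r^{km}}\bigr)^{r^{u}}=\zeta^{r^u}.
\]
So we may compute with any nonnegative representative of $u_j$. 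Moreover, $\zeta^{r^{u_j}}$ is again an $(r^m-1)$-th root of unity, since $\mu_{r^m-1}$ is a group closed under powers. Since $\rho$ permutes the indices $1,\dots,l_m$, the tuple $\bigl(\zeta_{\rho^{-1}(1)}^{r^{u_1}},\dots,\zeta_{\rho^{-1}(l_m)}^{r^{u_{l_m}}}\bigr)$ lies in $H_m=\mu_{r^m-1}^{l_m}$. Thus $\Phi_m((u,\rho))$ maps $H_m$ to itself.

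For the homomorphism property, take $\zeta,\eta\in H_m$. Because $k^\ast$ is commutative, the $j$-th component of the image of $\zeta\eta$ is
\[
(\zeta_{\rho^{-1}(j)}\eta_{\rho^{-1}(j)})^{r^{u_j}}=\zeta_{\rho^{-1}(j)}^{r^{u_j}}\,\eta_{\rho^{-1}(j)}^{r^{u_j}},
\]
which is the product of the $j$-th components of the images of $\zeta$ and $\eta$.

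For bijectivity, the simplest route is to observe that $\Phi_m((u,\rho))$ is a composition of two automorphisms. The first is the coordinate permutation $\zeta\mapsto(\zeta_{\rho^{-1}(1)},\dots,\zeta_{\rho^{-1}(l_m)})$, whose inverse is the permutation by $\rho$. The second is the componentwise power map $\xi\mapsto(\xi_1^{r^{u_1}},\dots,\xi_{l_m}^{r^{u_{l_m}}})$. Each power map $\xi\mapsto\xi^{r^{u_j}}$ on $\mu_{r^m-1}$ is injective, because $\gcd(r^{u_j},r^m-1)=1$: any prime dividing $r$ divides $r^m$ and so does not divide $r^m-1$. Alternatively one can exhibit the inverse explicitly as the power map with exponent $r^{m-u_j}$, using $\zeta^{r^m}=\zeta$ again. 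It remains to note that $\mu_{r^m-1}$ is finite, having at most $r^m-1$ elements as the roots of $x^{r^m-1}-1$ in $k$, so an injective self-map of it is bijective.

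The only delicate point is the well-definedness modulo $m$ of the exponent $r^{u_j}$. Everything else follows directly from $\zeta^{r^m}=\zeta$ on $\mu_{r^m-1}$ together with commutativity of $k^\ast$.
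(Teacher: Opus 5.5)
Your proposal is correct and follows the paper's proof essentially step by step: well-definedness modulo $m$ via $\zeta^{r^m}=\zeta$, closure in $H_m$, the homomorphism property from commutativity of $k^\ast$, and bijectivity from injectivity (using $\gcd(r^{u_j},r^m-1)=1$) together with finiteness of $H_m$. Splitting the map into a coordinate permutation followed by a componentwise power map, and the alternative explicit inverse with exponent $r^{m-u_j}$, are only a slightly cleaner packaging of that same argument.
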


\begin{proof}

\textbf{(i)} The map $\Phi_m((u,\rho)):H_m\longrightarrow H_m$ is well-defined.

(a) First, we show that the definition of $\Phi_m((u,\rho))$ is independent of the choice of representatives of the residue classes $u_j\in \mathbb{Z}/m\mathbb{Z}.$ Since $\zeta_j\in \mu_{r^m-1}$, we have $\zeta_j^{\,r^m-1}=1$ for all $1\le j\le l_m$. Hence $\zeta_j^{\,r^m}=\zeta_j,$ and the map $x\to x^{r^m}$ is the identity map on $\mu_{r^m-1}$. Now suppose that $u_j'\equiv u_j \pmod m$. Then $u_j'=u_j+qm$ for some $q\in\mathbb{Z}$, and therefore $r^{u_j'}=r^{u_j+qm}=r^{u_j}(r^m)^q$. Consequently, $\zeta_{\rho^{-1}(j)}^{\,r^{u_j'}}=\zeta_{\rho^{-1}(j)}^{\,r^{u_j}(r^m)^q}=\zeta_{\rho^{-1}(j)}^{\,r^{u_j}},$ since $\zeta_{\rho^{-1}(j)}^{\,r^m}=\zeta_{\rho^{-1}(j)}$ for all $j$. Thus, the expression $\zeta_{\rho^{-1}(j)}^{\,r^{u_j}}$ depends only on the residue class of $u_j$ modulo $m$.

(b) We show that $\Phi_m((u,\rho))(\zeta)\in H_m$. Since $\zeta_j\in\mu_{r^m-1}$ we have that $\zeta_j^{\,r^m-1}=1,$ for all $1\leq j \leq l_m$. Hence $(\zeta_{\rho^{-1}(j)}^{\,r^{u_j}})^{r^m-1}=(\zeta_{\rho^{-1}(j)}^{\,r^m-1})^{r^{u_j}}=1$. Thus every coordinate of $\Phi_m((u,\rho))(\zeta)$ is again a $(r^m-1)$-th root of unity, and therefore $\Phi_m((u,\rho))(\zeta)\in H_m$.

\textbf{(ii)} Next, we show that $\Phi_m((u,\rho)):H_m\longrightarrow H_m$ is a homomorphism. Let $\zeta,\eta\in H_m$. We have $$(\Phi_m((u,\rho))(\zeta\eta))_j= (\zeta \eta)_{\rho^{-1}(j)}^{r^{u_j}}=\zeta_{\rho^{-1}(j)}^{r^{u_j}}\eta_{\rho^{-1}(j)}^{r^{u_j}}=(\Phi_m((u,\rho))(\zeta))_j (\Phi_m((u,\rho))(\eta))_j.$$ It follows that $\Phi_m((u,\rho))$ is a group homomorphism.

\textbf{(iii)} Next, we show that the map $\Phi_m((u,\rho)):H_m\longrightarrow H_m$ given by $\zeta_j\longmapsto\zeta_{\rho^{-1}(j)}^{\,r^{u_j}}$, is bijective. Since $H_m$ is finite, it is enough to show that the map $\Phi_m((u,\rho))$ is injective. For $\zeta \in H_m$, consider $\Phi_m(u,\rho)(\zeta)=(1,\ldots,1)$, that is, $\zeta_{\rho^{-1}(j)}^{r^{u_j}}=1$ for all $j$. By (i)(a), we can assume that $u_j<m$ for each $j$. Then $\gcd(r^{u_j},r^m-1)=1$. Since the order of $\zeta_{\rho^{-1}(j)}$ divides both $r^{u_j}$ and $r^m-1$, and $\gcd(r^{u_j},r^m-1)=1$, it follows that $\zeta_{\rho^{-1}(j)}=1\in \mu_{r^m=1},$ for all $1\leq j \leq l_m$. Thus, the map is injective.

From (i), (ii) and (iii), $\Phi_m((u,\rho)):H_m\to H_m$ is a well-defined bijective group endomorphism of $H_m$, i.e. $\Phi_m((u,\rho))\in\operatorname{Aut}(H_m)$, for every $(u,\rho)\in G_m$.
\end{proof}

Next, we prove the following result, which will be used in \Cref{thm:isotropy}. The above notations will be used for the rest of this section. 

\begin{lmm}\label{lem:semidirect-action}
 Let $H_m$, $G_m$ and $\Phi_m:G_m\longrightarrow \text{Aut}(H_m)$ be as above. Then $\Phi_m$ is a group homomorphism.
\end{lmm}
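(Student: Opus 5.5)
The plan is to verify directly that $\Phi_m$ respects the group law of $G_m$, coordinate by coordinate. First I will write out the multiplication in the semidirect product $G_m=(\mathbb{Z}/m\mathbb{Z})^{l_m}\rtimes S_{l_m}$ explicitly. With the action $\rho\cdot v=(v_{\rho^{-1}(1)},\ldots,v_{\rho^{-1}(l_m)})$, the product is
$$(u,\rho)(v,\pi)=(u+\rho\cdot v,\ \rho\pi),\qquad\text{so}\qquad (u+\rho\cdot v)_j=u_j+v_{\rho^{-1}(j)}.$$
I also need to note that the identity of $G_m$ is $(0,\mathrm{id})$. Clearly $\Phi_m((0,\mathrm{id}))$ is the identity of $H_m$, although for a homomorphism into a group this follows from multiplicativity anyway.

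The main step is to fix $\zeta\in H_m$ and an index $j$, and compute the $j$-th coordinate of $\Phi_m((u,\rho))\bigl(\Phi_m((v,\pi))(\zeta)\bigr)$. Applying the definition twice gives
$$\Bigl(\Phi_m((v,\pi))(\zeta)\Bigr)_{\rho^{-1}(j)}^{\,r^{u_j}}=\Bigl(\zeta_{\pi^{-1}\rho^{-1}(j)}^{\,r^{v_{\rho^{-1}(j)}}}\Bigr)^{r^{u_j}}=\zeta_{(\rho\pi)^{-1}(j)}^{\,r^{\,u_j+v_{\rho^{-1}(j)}}}.$$
On the other hand, the $j$-th coordinate of $\Phi_m((u+\rho\cdot v,\rho\pi))(\zeta)$ is by definition $\zeta_{(\rho\pi)^{-1}(j)}^{\,r^{(u+\rho\cdot v)_j}}$, and $(u+\rho\cdot v)_j=u_j+v_{\rho^{-1}(j)}$, so the two expressions agree. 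Hence $\Phi_m((u,\rho)(v,\pi))=\Phi_m((u,\rho))\circ\Phi_m((v,\pi))$.

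The only delicate point, and the step I expect to need care, is that exponents are residues modulo $m$. The identity $(r^{a})^{r^{b}}$-style step is really $(x^{r^{b}})^{r^{a}}=x^{r^{a+b}}$ for integer representatives $a,b$, whereas $u_j+v_{\rho^{-1}(j)}$ is computed in $\mathbb{Z}/m\mathbb{Z}$. To handle this, I will choose integer representatives, perform the computation in $\mathbb{Z}$, and then invoke part (i)(a) of the preceding lemma. That part says $\zeta^{r^{a}}$ depends only on $a \bmod m$ for $\zeta\in\mu_{r^m-1}$, so changing representatives does not alter the result. I also need to keep the index bookkeeping $(\rho\pi)^{-1}=\pi^{-1}\rho^{-1}$ consistent with the convention used for the action. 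Together with the preceding lemma, which shows that each $\Phi_m((u,\rho))$ lies in $\mathrm{Aut}(H_m)$, this will show that $\Phi_m$ is a group homomorphism.
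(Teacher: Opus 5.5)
Your proposal is correct and is essentially the paper's proof: both expand $(u,\rho)(v,\pi)=(u+\rho\cdot v,\rho\pi)$ and check coordinatewise that both sides equal $\zeta_{\pi^{-1}\rho^{-1}(j)}^{\,r^{u_j+v_{\rho^{-1}(j)}}}$. Your explicit use of part (i)(a) of the preceding lemma, to justify computing with integer representatives of the residues modulo $m$, supplies a point the paper leaves implicit.
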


\begin{proof}
    Recall that the action in the semidirect product $G_m$ is given by $\rho\cdot(v)=\rho\cdot(v_1,\ldots,v_{l_m})=(v_{\rho^{-1}(1)},\ldots,v_{\rho^{-1}(l_m)}),$ where $\rho\in S_{l_m}$ and $v=(v_1,\ldots,v_{l_m})\in (\mathbb{Z}/m\mathbb{Z})^{l_m}$. Let $(u,\rho),(v,\pi)\in G_m$. Then $(u,\rho)(v,\pi)=(u+\rho\cdot(v),\rho\pi)$. We have that 
    \begin{align}\label{eqn Phi}
        (\Phi_m((u,\rho)(v,\pi))(\zeta))_j&=(\Phi_m(u+\rho\cdot(v),\rho\pi)(\zeta))_j \nonumber\\
        &=(\Phi_m((u_1+v_{\rho^{-1}(1)},\ldots,u_{l_m}+v_{\rho^{-1}(l_m)}),\rho\pi)(\zeta))_j \nonumber\\
       & =\zeta_{(\rho\pi)^{-1}(j)}^{r^{u_j+v_{\rho^{-1}(j)}}} \nonumber\\
        &=\zeta_{\pi^{-1}\rho^{-1}(j)}^{r^{u_j+v_{\rho^{-1}(j)}}}.
    \end{align} 
    Now, consider $(\Phi_m(u,\rho)\Phi_m(v,\pi)(\zeta))_j.$ We have that $(\Phi_m(v,\pi)(\zeta))_j=\zeta_{\pi^{-1}(j)}^{r^{v_j}}$. Consider $\eta=(\eta_1,\ldots,\eta_{l_m})\in H_m$ such that $(\eta)_j=\zeta_{\pi^{-1}(j)}^{r^{v_j}}$ for all $1\leq j \leq l_m$. Then 
    \begin{equation}\label{eqn' Phi}
        (\Phi_m(u,\rho)(\Phi_m(v,\pi)(\zeta)))_j=(\Phi_m(u,\rho)(\eta))_j=\eta_{\rho^{-1}(j)}^{r^{u_j}}=(\eta_{\rho^{-1}(j)})^{r^{u_j}}.
    \end{equation}
     Since $\eta_{\rho^{-1}(j)}=\zeta_{\pi^{-1}\rho^{-1}(j)}^{r^{v_{\rho^{-1}(j)}}}$, Equation~\ref{eqn' Phi} becomes 
     \begin{equation}\label{eqn'' Phi}
         (\Phi_m(u,\rho)\Phi_m(v,\pi)(\zeta))_j=(\zeta_{\pi^{-1}\rho^{-1}(j)}^{r^{v_{\rho^{-1}(j)}}})^{r^{u_j}}=\zeta_{\pi^{-1}\rho^{-1}(j)}^{r^{u_j+v_{\rho^{-1}(j)}}}.
     \end{equation}
      Then from Equation~\ref{eqn Phi} and Equation~\ref{eqn'' Phi}, we have $\Phi_m((u,\rho)(v,\pi))=\Phi_m(u,\rho)\Phi_m(v,\pi).$ 
\end{proof}

\begin{thm}\label{thm:isotropy}
Let $n,r,\sigma, l_m$ be as in Notation \ref{last-not}. Let $H_m$, $G_m$ and $\Phi_m$ be as above. Consider the Jouanolou-type derivation $J:=J(n,r,\sigma)=\sum_{i=1}^nx_i^{\,r}\,\partial_{x_{\sigma(i)}}$. Then $$\text{Aut}(k[x_1,\dots,x_n])_J\cong\prod_m\left(\mu_{r^m-1}^{l_m}\rtimes_{\Phi_m}((\mathbb{Z}/m\mathbb{Z})^{l_m}\rtimes S_{l_m})\right),$$ where $m$ varies over the lengths of cycles in the disjoint cycle decomposition of $\sigma$.
\end{thm}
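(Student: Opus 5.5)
By \Cref{direct sum Aut}, Aut$(k[X])_J$ is isomorphic to the product over the distinct cycle lengths $m$ of Aut$(k[X_m])_{J|_{k[X_m]}}$. It therefore suffices to fix one cycle length $m$ and prove
\[
\text{Aut}(k[X_m])_{J|_{k[X_m]}}\cong H_m\rtimes_{\Phi_m} G_m .
\]
Here $J|_{k[X_m]}$ is the Jouanolou-type derivation attached to a permutation $\sigma_m$ that is a product of $l_m$ disjoint $m$-cycles $C_1,\dots,C_{l_m}$. I will write $C_j=(i_{j,1}\,\dots\,i_{j,m})$. \Cref{J main} applies verbatim to this restriction, even when $\sigma_m$ is the identity ($m=1$), since its proof never used $\sigma\neq\mathrm{id}$. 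So the elements are exactly the maps $\varphi_{\lambda,\tau}(x_i)=\lambda_i x_{\tau(i)}$ with $\tau$ in the centralizer of $\sigma_m$ and $\lambda_{\sigma(i)}=\lambda_i^r$.

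\emph{Step 1: the scalars.} Going around $C_j$, the relation $\lambda_{i_{j,s}}=\lambda_{i_{j,1}}^{r^{s-1}}$ closes up to $\lambda_{i_{j,1}}=\lambda_{i_{j,1}}^{r^m}$. Hence $\zeta_j:=\lambda_{i_{j,1}}$ lies in $\mu_{r^m-1}$, and the whole vector $\lambda$ is determined by $\zeta=(\zeta_1,\dots,\zeta_{l_m})\in H_m$. Conversely, every $\zeta\in H_m$ produces a valid $\lambda$.

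\emph{Step 2: the permutations.} The centralizer of $\sigma_m$ in Sym$(\mathrm{Supp})$ is the wreath product $(\mathbb Z/m\mathbb Z)\wr S_{l_m}=G_m$. Concretely, given $(u,\rho)\in G_m$, define $\tau_{u,\rho}(i_{j,s})=i_{\rho(j),\,s+u_{\rho(j)}}$, with the second index read mod $m$. I will check that $\tau_{u,\rho}$ commutes with $\sigma_m$ and that every commuting $\tau$ has this form: such a $\tau$ permutes the cycles, which gives $\rho$, and acts on each cycle as a rotation, which gives $u$. I will also check that $(u,\rho)\mapsto\tau_{u,\rho}$ is an isomorphism onto the centralizer, using the multiplication $(u,\rho)(v,\pi)=(u+\rho\cdot v,\rho\pi)$. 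The indexing convention (whether $u$ is attached to $j$ or to $\rho(j)$) has to be chosen to match the formula for $\Phi_m$, and I will fix it during this verification.

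\emph{Step 3: assembling the semidirect product.} The diagonal maps $\varphi_{\lambda,\mathrm{id}}$ form a normal abelian subgroup $\widetilde H\cong H_m$; normality holds because conjugating a diagonal map by any $\varphi_{\lambda',\tau}$ gives a diagonal map. Next I need a complement. The naive choice, the pure permutations $\psi_\tau(x_i)=x_{\tau(i)}$ with $\lambda=1$, are all in the isotropy group since $\lambda\equiv1$ satisfies $\lambda_{\sigma(i)}=\lambda_i^r$. They form a subgroup $\widetilde G\cong G_m$ meeting $\widetilde H$ trivially, and every element factors as $\varphi_{\lambda,\mathrm{id}}\circ\psi_\tau$. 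So the isotropy group is $\widetilde H\rtimes\widetilde G$, and it remains to identify the conjugation action. Computing $\psi_\tau\varphi_{\lambda}\psi_\tau^{-1}(x_i)=\lambda_{\tau^{-1}(i)}x_i$, the new first-coordinate scalar on cycle $j$ is $\lambda_{\tau^{-1}(i_{j,1})}=\lambda_{i_{\rho^{-1}(j),\,1-u_j}}=\zeta_{\rho^{-1}(j)}^{r^{-u_j}}$, up to the convention from Step 2.

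\emph{Main obstacle.} The difficulty is making this action agree exactly with $\Phi_m$, whose formula is $\zeta_{\rho^{-1}(j)}^{r^{u_j}}$ with a positive exponent. Exponentiation by $r$ is invertible on $\mu_{r^m-1}$ with inverse $r^{m-1}$, so the sign can be repaired by composing the identification $G_m\to\widetilde G$ with the automorphism $u\mapsto -u$ of $G_m$, which preserves the wreath-product law. Alternatively I can set $\tau_{u,\rho}(i_{j,s})=i_{\rho(j),\,s-u_{\rho(j)}}$. I will try this reparametrization first and check that the resulting map $G_m\to\text{Aut}(H_m)$ is precisely $\Phi_m$, which \Cref{lem:semidirect-action} has already shown to be a homomorphism. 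The abstract isomorphism $\widetilde H\rtimes\widetilde G\cong H_m\rtimes_{\Phi_m}G_m$ then follows exactly as in \Cref{rem1}. Taking the product over $m$ via \Cref{direct sum Aut} finishes the proof.
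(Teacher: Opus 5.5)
Your proposal is correct. After the steps it shares with the paper (reduction to one cycle length via Proposition~\ref{direct sum Aut}, the monomial form from Theorem~\ref{J main}, and parametrizing an element by $(\zeta,u,\rho)$), it assembles the group differently.

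The paper never splits the isotropy group. It writes a general element as $\varphi(x_{i_{j,s}})=\zeta_j^{r^{s-1}}x_{i_{\rho^{-1}(j),s+u_j}}$, sets $\Theta(\varphi)=(\zeta,(u,\rho))$, and composes two general elements directly. This gives $\Theta(\varphi\phi)=\Theta(\phi)\Theta(\varphi)$ in $H_m\rtimes_{\Phi_m}G_m$, i.e.\ an isomorphism onto the opposite group, and the paper finishes with $G\cong G^{\mathrm{op}}$.

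You instead follow the pattern of Theorem~\ref{sem-dir-product}:
\begin{itemize}
\item the diagonal automorphisms form the normal subgroup $\widetilde H\cong H_m$;
\item the automorphisms $x_i\mapsto x_{\tau(i)}$, with $\tau$ centralizing $\sigma_m$, form a complement;
\item this complement is identified with the wreath product $G_m$ through an honest homomorphism. Your convention $\tau_{u,\rho}(i_{j,s})=i_{\rho(j),s+u_{\rho(j)}}$ does respect $(u,\rho)(v,\pi)=(u+\rho\cdot v,\rho\pi)$.
\end{itemize}
The conjugation action is then $\zeta\mapsto(\zeta_{\rho^{-1}(j)}^{r^{-u_j}})_j$. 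Precomposing with the automorphism $(u,\rho)\mapsto(-u,\rho)$ of $G_m$ (equivalently, using $i_{\rho(j),s-u_{\rho(j)}}$) turns this into exactly $\Phi_m(u,\rho)$. The transfer to $H_m\rtimes_{\Phi_m}G_m$ then goes through as in Remark~\ref{rem1}.

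The trade-off is as follows. Your route gives an explicit order-preserving isomorphism and makes the structure visible: the roots of unity form the normal subgroup and the centralizer of $\sigma_m$ is the complement. The cost is the sign bookkeeping. The paper's route is a single uniform computation that needs no complement or normality check, but its isomorphism is really $\varphi\mapsto\Theta(\varphi)^{-1}$.

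Your observation that the proof of Theorem~\ref{J main} never uses $\sigma\neq\mathrm{id}$ is worth keeping. The paper applies that theorem without comment to the $m=1$ factor, where the restricted permutation is the identity.
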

\begin{proof} By Proposition~\ref{direct sum Aut}, it is enough to determine the group Aut$(k[X_m])_{J|_{k[X_m]}}$ for a fixed cycle length $m$. Thus, we can assume that $\sigma=C_1C_2\ldots C_{l_m}$ is the disjoint cycle decomposition of $\sigma,$ where 
$C_j=(i_{j,1},i_{j,2},\ldots, i_{j,m})$ for all $1\leq j \leq {l_m}.$ Throughout the proof, we shall consider the second index $s$ of $i_{j,s}$ under modulo $m$. Recall that $k[X_m]=k[x_{i_{1,1}},\ldots,x_{i_{1,m}},\ldots, x_{i_{l_m,1}},\ldots,x_{i_{l_m,m}}].$

Let $\varphi\in$ Aut$(k[X_m])_{J|_{k[X_m]}}$. Then, by \Cref{J main}, we have that $\varphi(x_i)=\lambda_ix_{\tau(i)}$ for all $i$, where $\tau\in S_{ml_m}$ and $\lambda_i\in k^\ast$ with $\tau\sigma=\sigma\tau$ and $\lambda_{\sigma(i)}=\lambda_i^{\ r}$. Arguing as in the proof of Proposition~\ref{direct sum Aut}(i), we have that for $1 \leq j \leq l_m$, 
$\tau(i_{j,1})= i_{j',1+u_j}$ for some $j'\in\{1,\dots , l_m\}$ and $0\leq u_j \leq m-1$. Then for all $1\leq s\leq m$ it follows that $\tau(i_{j,s})=\tau(\sigma^{s-1}(i_{j,1}))=\sigma^{s-1}(\tau(i_{j,1}))=\sigma^{s-1}(i_{j',1+u_j})=i_{j',s+u_j}.$ Hence 
$\tau(\text{Supp}(C_j))=\text{Supp}(C_{j'})$. Thus, there exists $\rho\in S_{l_m}$ such that $\tau(\text{Supp}(C_j))=\text{Supp}(C_{\rho^{-1}(j)})$ for all $j=1,\ldots,l_m$. 
Hence $\tau(i_{j,s})=i_{\rho^{-1}(j),s+u_j}.$ Since $\varphi$ commutes with $J,$ it follows from \Cref{J main} that $\lambda_{\sigma(i)}=\lambda_i^r$ for all $i$, and thereby $\lambda_{i_{j,s}}=\lambda_{i_{j,1}}^{r^{s-1}}$ for all $1\leq j\leq l_m$ and $1\leq s\leq m.$ Let $\zeta_j:=\lambda_{i_{j,1}}$ for all $j.$ Then using $\lambda_{i}=\lambda_{\sigma^{-1}(i)}^r$ for all $i$, we have that $\zeta_j=\lambda_{i_{j,1}}=\lambda_{i_{j,m}}^r=\zeta_j^{r^m}$. Therefore $\zeta_j^{r^{m}-1}=1,$ and hence $\zeta_j\in \mu_{r^m-1}.$ Thus $\zeta:=(\zeta_1,\ldots,\zeta_{l_m})\in H_m.$ Hence, every $\varphi\in$Aut$(k[X_m])_J$ is uniquely determined by $\zeta=(\zeta_1,\ldots,\zeta_{l_m})\in H_m$, $u=(u_1,\ldots,u_{l_m})\in (\mathbb{Z}/m\mathbb{Z})^{l_m}$ and $\rho\in S_{l_m}$, and is given by
\begin{equation}\label{eqn aut}
    \varphi(x_{i_{j,s}})=\zeta_j^{r^{s-1}}x_{i_{\rho^{-1}(j),s+u_j}}.
\end{equation}
 Recall that $G_m=(\mathbb{Z}/m\mathbb{Z})^{l_m}\rtimes S_{l_m}$ is the semidirect product where $S_{l_m}$ acts on $(\mathbb{Z}/m\mathbb{Z})^{l_m}$ by $\rho\cdot (u_1,\ldots,u_{l_m})=(u_{\rho^{-1}(1)},\ldots,u_{\rho^{-1}(l_m)})$. Next, from Lemma~\ref{lem:semidirect-action} we have the semidirect product $H_m\rtimes_{\Phi_m}G_m$, where the action of $G_m$ on $H_m$ is given by $(\Phi_m(u,\rho)(\zeta))_j=\zeta_{\rho^{-1}(j)}^{r^{u_j}}$ for all $1\leq j \leq l_m.$ 
Let us denote the binary operation on the opposite group $(H_m\rtimes_{\Phi_m} G_m)^{\text{op}}$ by $\star.$ Define $$\Theta:\text{Aut}(k[X_m])_J\longrightarrow (H_m\rtimes_{\Phi_m} G_m)^{\text{op}}$$ by $\Theta(\varphi)=((\zeta_1,\ldots,\zeta_{l_m}),((u_1,\ldots,u_{l_m}),\rho)),$where $\varphi(x_{i_{j,s}})=\zeta_j^{r^{s-1}}x_{i_{\rho^{-1}(j),s+u_j}}$ for all $j=1,\ldots l_m$ and $1\leq s \leq m.$ 

Next, we show that $\Theta$ is a group homomorphism. Let $\varphi,\phi\in$Aut$(k[X_m])_J$. Suppose $\varphi(x_{i_{j,s}})=\zeta_j^{r^{s-1}}x_{i_{\rho^{-1}(j),s+u_j}}$ and $\phi(x_{i_{j,s}})=\eta_j^{r^{s-1}}x_{i_{\pi^{-1}(j),s+v_j}}$, for all $1\leq j\leq l_m$ and $1\leq s\leq m,$ where $(\zeta_{1},\ldots,\zeta_{l_m}),(\eta_{1},\ldots,\eta_{l_m})\in \mu_{r^m-1}^{l_m}$, $(u_{1},\ldots,u_{l_m}),(v_{1},\ldots,v_{l_m})\in (\mathbb{Z}/m\mathbb{Z})^{l_m}$ and $\rho,\pi\in S_{l_m}.$ Then
\begin{align*}
    \varphi\phi(x_{i_{j,s}})=\varphi(\eta_j^{r^{s-1}}x_{i_{\pi^{-1}(j),s+v_j}})=\eta_j^{r^{s-1}}\varphi(x_{i_{\pi^{-1}(j),s+v_j}})&=\eta_j^{r^{s-1}}\zeta_{\pi^{-1}(j)}^{r^{s+v_j-1}}x_{i_{\rho^{-1}(\pi^{-1}(j)},s+v_j+u_{\pi^{-1}(j)}}\\
    &=(\eta_j\zeta_{\pi^{-1}(j)}^{r^{v_j}})^{r^{s-1}}x_{i_{(\pi\rho)^{-1}(j)},s+v_j+u_{\pi^{-1}(j)}}.
\end{align*} Thus we have 
\begin{equation}\label{eqn theta}
    \Theta(\varphi\phi)=((\eta_1\zeta_{\pi^{-1}(1)}^{r^{u_1}},\ldots,\eta_{l_m}\zeta_{\pi^{-1}(l_m)}^{r^{u_{l_m}}}),((v_1+u_{\pi^{-1}(1)},\ldots,v_{l_m}+u_{\pi^{-1}(l_m)}),\pi\rho)).
\end{equation} Let $\zeta:=(\zeta_{1},\ldots,\zeta_{l_m})$, $\eta:=(\eta_{1},\ldots,\eta_{l_m})$, $u_1:=(u_1,\ldots,u_{l_m})$ and $v:=(v_1,\ldots,v_{l_m})$. Then, rewriting Equation~\ref{eqn theta} as
\begin{equation}\label{eqn' theta}
    \Theta(\varphi\phi)=(\eta\Phi_m(v,\pi)(\zeta),((v+\pi\cdot u),\pi\rho)),
\end{equation} the Equation~\ref{eqn' theta} becomes $$\Theta(\varphi\phi)=(\eta\Phi_m(v,\pi)(\zeta),(v+\pi\cdot u),\pi\rho)=(\eta,(v,\pi))(\zeta,(u,\rho))=\Theta(\phi)\Theta(\varphi)=\Theta(\varphi)\star\Theta(\phi).$$ Therefore, $\Theta$ is a group homomorphism. 

Next, we show that $\Theta$ is bijective. By Equation \ref{eqn aut}, we have the uniqueness of $\zeta_j,u_j,\rho$, where $j=1,\ldots,l_m$, which implies that $\Theta$ is injective. To show surjectivity, let $((\zeta_1,\ldots,\zeta_{l_m}),((u_1,\ldots,u_{l_m}),\rho))\in (H_m\rtimes_{\Phi_m} G_m)^{\text{op}}.$ Define a permutation $\tau\in S_{ml_m}$ by $\tau(i_{j,s}):=\sigma^{u_j}(i_{\rho^{-1}(j),s})=i_{\rho^{-1}(j),s+u_j}$, and $\lambda_{i_{j,s}}:=\zeta_j^{r^{s-1}},$ for all $1\leq j \leq l_m$ and $1\leq s \leq m,$ where the index $s$ is taken under modulo $m.$ We first verify that $\tau$ commutes with $\sigma.$ For $1\leq s\leq m$, we have that $\tau\sigma(i_{j,s})=\tau(i_{j,s+1})= i_{\rho^{-1}(j),s+1+u_j}$ and 
$\sigma \tau(i_{j,s})= \sigma(i_{\rho^{-1}(j),s+u_j})= i_{\rho^{-1}(j),s+1+u_j}$.
 Hence $\sigma \tau=\tau\sigma.$ Next, we show that $\lambda_{\sigma(i)}=\lambda_i^r$ for all $i$. For $1\leq s<m,$ $\lambda_{\sigma(i_{j,s})}=\lambda_{i_{j,s+1}}=\zeta_j^{r^{s}}=(\zeta_j^{r^{s-1}})^r=\lambda_{i_{j,s}}^r.$ For $s=m$, since $\zeta_j^{r^m}=\zeta_j,$ we have that $\lambda_{\sigma(i_{j,m})}=\lambda_{i_{j,1}}=\zeta_j=\zeta_j^{r^m}=(\zeta_j^{r^{m-1}})^r=\lambda_{i_{j,m}}^r.$ Thus $\lambda_{\sigma(i)}=\lambda_i^r$ for all $i.$ Thus, $\tau$ and $\lambda_i$ satisfy the hypotheses of the converse part of \Cref{J main}. Hence the map $\varphi$ defined by  $\varphi(x_i)=\lambda_ix_{\tau(i)}$ for all $1\leq i \leq ml_m$, belongs to Aut$(k[X_m])_J$ and $\Theta(\varphi)=((\zeta_1,\ldots,\zeta_{l_m}),((u_1,\ldots,u_{l_m}),\rho))$. Hence, $\Theta$ is surjective. Therefore, Aut$(k[X_m])_J\cong (H_m\rtimes_{\Phi_m} G_m)^{\text{op}}= (\mu_{r^m-1}^{l_m}\rtimes_{\Phi_m}((\mathbb{Z}/m\mathbb{Z})^{l_m}\rtimes S_{l_m}))^{\text{op}}.$ Since every group is isomorphic to its opposite group, we get that Aut$(k[X_m])_J\cong (\mu_{r^m-1}^{l_m}\rtimes_{\Phi_m}((\mathbb{Z}/m\mathbb{Z})^{l_m}\rtimes S_{l_m})).$

\end{proof}

\begin{cor}[\textbf{Jouanolou derivations}]
    Let $\sigma=(1\, 2\,\ldots \,n)\in S_n.$ The corresponding derivation to $\sigma$ is given by $J=x_1^r\partial_{x_2}+x_2^r\partial_{x_3}+\cdots+x_n^r\partial_{x_1}$. Then Aut$(k[x_1,\ldots,x_n])_J\cong\mu_{r^n-1}\rtimes_\Phi \mathbb{Z}/n\mathbb{Z},$ where $\Phi(l)(\zeta)=\zeta^{r^l}$ for $l\in \mathbb{Z}/n\mathbb{Z} $ and $\zeta\in \mu_{r^n-1}$.
\end{cor}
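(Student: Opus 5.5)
This is the special case of \Cref{thm:isotropy} in which $\sigma=(1\,2\,\ldots\,n)$ is a single $n$-cycle. In that case the only cycle length is $m=n$, with $l_n=1$, so the product over $m$ has one factor. By \Cref{thm:isotropy} we obtain
$$\text{Aut}(k[X])_J\cong \mu_{r^n-1}^{1}\rtimes_{\Phi_n}\big((\mathbb{Z}/n\mathbb{Z})^{1}\rtimes S_1\big).$$
Since $S_1$ is trivial, $G_n=(\mathbb{Z}/n\mathbb{Z})\rtimes S_1$ is just $\mathbb{Z}/n\mathbb{Z}$, via the isomorphism $(u,\mathrm{id})\mapsto u$. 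Also $H_n=\mu_{r^n-1}$. The Jouanolou derivation is $J(n,r,\sigma)$ for this $\sigma$, since $\sum_i x_i^r\partial_{x_{i+1}}$ with indices modulo $n$ is exactly $J$. Proposition~\ref{direct sum Aut} is trivial here because $X_n=\{x_1,\ldots,x_n\}$.

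The next step is to identify the action. With $\rho=\mathrm{id}\in S_1$ and $u=(l)$, the formula $(\Phi_n((u,\rho))(\zeta))_1=\zeta_{\rho^{-1}(1)}^{r^{u_1}}$ reduces to $\Phi(l)(\zeta)=\zeta^{r^l}$. This is the stated action. It is well defined modulo $n$ because $\zeta^{r^n}=\zeta$ on $\mu_{r^n-1}$, as checked in the lemma preceding Lemma~\ref{lem:semidirect-action}.

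The semidirect product $H_n\rtimes_{\Phi_n}G_n$ then transports along $G_n\cong\mathbb{Z}/n\mathbb{Z}$ to $\mu_{r^n-1}\rtimes_\Phi\mathbb{Z}/n\mathbb{Z}$. This holds because the isomorphism of acting groups intertwines $\Phi_n$ with $\Phi$, and $\Phi$ is a homomorphism by Lemma~\ref{lem:semidirect-action}.

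There is no real obstacle. The only point needing care is the bookkeeping that collapses $S_1$ and the index $j=1$. Optionally, one can describe the isomorphism explicitly: $\varphi(x_s)=\zeta^{r^{s-1}}x_{s+l}$ corresponds to $(\zeta,l)$, as in Equation~\ref{eqn aut}. The opposite-group subtlety from the proof of \Cref{thm:isotropy} is already absorbed there, since $G\cong G^{op}$.
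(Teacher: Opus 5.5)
Your proposal is correct and matches the paper, which states this corollary without separate proof as the specialization of Theorem~\ref{thm:isotropy} to a single $n$-cycle ($m=n$, $l_n=1$, $S_1$ trivial), where $\Phi_n$ collapses to $\zeta\mapsto\zeta^{r^l}$. One small caution on your optional remark: the explicit assignment $\varphi\mapsto(\zeta,l)$ reverses products, which is why the paper lands in the opposite group. For instance, with $n=r=2$, composing $x_1\mapsto\zeta x_1,\ x_2\mapsto\zeta^2x_2$ after the swap gives $(\zeta^2,1)$ rather than $(\zeta,0)(1,1)=(\zeta,1)$; so the honest isomorphism is $\varphi\mapsto(\zeta,l)^{-1}$.
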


\section*{Acknowledgments}
The first-named author would like to thank the Department of Science and Technology of India for the INSPIRE Faculty fellowship grant IFA23-MA 197 and Anusandhan National Research Foundation (ANRF) for the ARG-MATRICS Grant (ANRF/ARGM/2025/001651/MTR). The second author would like to thank IIT Indore for the Young Faculty Research Grant (No. IITI/YFRSG/2024-25/Phase-VII/05) and Anusandhan National Research Foundation (ANRF) for the PMECRG Grant (ANRF/ECRG/2025/003435/PMS). The third-named author is financially supported by the SRF from CSIR India, Sr. No. 09/1022(16075)/2022-EMR-I.

\bibliographystyle{alpha}

\end{document}